\begin{document}

\markboth{H. Attarchi and M. M. Rezaii}
{Cartan Spaces and Natural Foliations on the Cotangent Bundle}

%
\catchline{}{}{}{}{}
%

\title{Cartan Spaces and Natural Foliations on the Cotangent Bundle}

\author{H. Attarchi}

\address{Department of Mathematics and Computer Science\\
Amirkabir University of Technology, Tehran, Iran\\
\email{hassan.attarchi@aut.ac.ir}}

\author{M. M. Rezaii}

\address{Department of Mathematics and Computer Science\\
Amirkabir University of Technology, Tehran, Iran\\
\email{mmreza@aut.ac.ir}}

\maketitle

\begin{history}
\received{(Day Month Year)}
\revised{(Day Month Year)}
\end{history}

\begin{abstract}
In this paper, the natural foliations in cotangent bundle $T^*M$ of Cartan space $(M,K)$ is studied. It is shown that geometry of these foliations are closely related to the geometry of the Cartan space $(M,K)$ itself. This approach is used to obtain new characterizations of Cartan spaces with negative constant curvature.
\end{abstract}

\keywords{Cartan Spaces; foliation; Cartan space of negative constant curvature.}

\section{Introduction}
Lagrange space has been certified as an excellent model for some important problems in Relativity, Gauge Theory and Electromagnetism~\cite{anastasia,anastasia1}. The geometry of Lagrange spaces gives a model for both the gravitational and electromagnetic field. P. Finsler in his Ph.D. thesis introduced the concept of general metric function, which can be studied by means of variational calculus. Later, L. Berwald, J.L. Synge and E. Cartan precisely gave the correct
definition of a Finsler space~\cite{matsumoto,miron}. These structures play a fundamental role in study of the geometry of tangent bundle $TM$. The geometry of cotangent bundle $T^*M$ and tangent bundle $TM$ which follows the same outlines are related by Legendre transformation. From this duality, the geometry of a Hamilton space can be obtained from that of certain Lagrange space and vice versa. As a particular case, we can associate to a given Finsler space its dual, which is a Cartan space~\cite{miron1,miron2}. Using this duality several important results in the Cartan spaces can be obtained: the canonical nonlinear connection, the canonical metrical connection etc. Therefore, the theory of Cartan spaces has the same symmetry and beauty like Finsler geometry. Moreover, it gives a geometrical framework for the Hamiltonian theory of Mechanics or Physical fields. With respect to the importance of these spaces in Physical areas and the quick growth of Finsler geometry in recent decades, present work is formed to develop some properties of Finsler geometry to the Cartan spaces. In~\cite{bejancu} and~\cite{bejancu2008} the natural foliations on tangent bundle of a Finsler manifold have been studied. Here, some theorems of these objects are reconsidered on cotangent bundle of a Cartan space for some natural foliations such as Liouville-Hamilton vector field and its complement distribution in $TT^*M$.

Let $(M,K)$ be a Cartan space then to achieve these aims, the present work is organized in the following way. In Section 2, some definitions and results on Cartan spaces which is needed in following sections is provided. In particular, canonical nonlinear connections, Sasaki lift of the metric on cotangent bundle and coefficients of Levi-Civita connection are presented, for more details see~\cite{miron}. In Section 3, to simplify some equations and proofs in following sections a new frame is set on $TT^*M$ such that $TT^*M$ is decomposed to two foliations Liouville-Hamilton vector field and its complement distribution in $TT^*M$. The Levi-Civita connection on a Cartan space is computed in the new basis and relation of curvature tensor fields of level hypersurfaces $K=const.$ and $T^*M$ is calculated. In Section 4, the ideas of~\cite{bejancu} are developed to cotangent bundle of a Cartan space and six natural foliations of cotangent bundle are introduced. They are studied from different viewpoints such as being totally geodesic, bundle like and etc. As a main result of present paper, the condition of being a Cartan space with negative constant curvature is found. Finally in Section 5, \emph{1}-indicatrix bundles in cotangent bundle of the Cartan space $(M,K)$ which is denoted by $I^*\!M(1)$ is studied. Similar to Finsler geometry~\cite{bejancu2008}, it is shown that it naturally has contact structure. In addition, it is shown that $I^*\!M(1)$ cannot have Sasakian structure with lifted Sasaki metric $G$.

\section{Preliminaries and notations}
Let $M$ be a real \emph{n}-dimensional differentiable manifold and let $(T^*M,\pi^*,M)$ be its cotangent bundle. If $(x^i)$, $(i=1,...,n)$, is a local coordinate system on a domain $U$ of a chart on $M$, the induced system of coordinates on $\pi^{*-1}(U)$ are $(x^i,p_i)$. The coordinates $p_i$ are called \emph{momentum variables}. The \emph{Liouville-Hamilton vector field}, \emph{Liouville 1-form} and \emph{canonical symplectic structure} on $T^*M$ are denoted by $C^*$, $\omega$ and $\theta$, respectively, and their local expressions are as follows:
\begin{equation}~\label{geo object}
C^*=p_i\partial^i, \ \ \omega=p_idx^i, \ \ \theta=dp_i\wedge dx^i
\end{equation}
where $\partial^i:=\frac{\partial}{\partial P_i}$. Let $\{.,.\}$ be the Poisson bracket on $T^*M$, defined by:
$$\{f,g\}=\frac{\partial f}{\partial p_i}\frac{\partial g}{\partial x^i}-\frac{\partial g}{\partial p_i}\frac{\partial f}{\partial x^i},\ \ \forall f,g\in C^{\infty}(T^*M)$$
A Cartan space is a pair $(M,K(x,p))$ such that following axioms hold:
\begin{enumerate}
\item{K is a real positive function on $T^*M$, differentiable on $T^*M\backslash\{0\}$ and continuous on the null section of the projection $\pi^*$.}
\item{K is positively 1-homogeneous with respect to the momenta $p_i$.}
\item{The Hessian of $K^2$, with elements $g^{ij}(x,y)=\frac{1}{2}\partial^i\partial^jK^2$ is positive-defined.}
\end{enumerate}
The nonlinear connection coefficient $N_{ij}$ of the Cartan space $(M,K)$ is given by:
$$N_{ij}=\frac{1}{4}\{g_{ij},K^2\}-\frac{1}{4}\left(g_{ik}\frac{\partial^2K^2}{\partial p_k\partial x^j}+g_{jk}\frac{\partial^2K^2}{\partial p_k\partial x^i}\right)$$
where $(g_{ij})$ is the inverse matrix of $(g^{ij})$. The adapted basis of $TT^*M$ and $T^*T^*M$ with respect to the natural nonlinear connection coefficient $N_{ij}$ are expressed as follows:
\begin{equation}~\label{basis}
\left\{
\begin{array}{l}
TT^*M=<\frac{\delta}{\delta x^i},\partial^i>\ ,\ \ \ \frac{\delta}{\delta x^i}=\frac{\partial}{\partial x^i}+N_{ij}\partial^j\\ \cr T^*T^*M=<dx^i,\delta p_i>\ ,\ \ \ \delta p_i=dp_i-N_{ij}dx^j
\end{array}
\right.
\end{equation}
In addition, with respect to these bases the Sasakian lift of the metric tensor $g^{ij}$ on $T^*M$ is shown by $G$ and given by:
\begin{equation}~\label{metric}
G:=g_{ij}dx^i\otimes dx^j+g^{ij}\delta p_i\otimes\delta p_j
\end{equation}
The almost complex structure compatible with metric $G$ is shown by $J$ and has local expression as follows:
$$J:=g^{ij}\frac{\delta}{\delta x^j}\otimes\delta p_i-g_{ij}\partial^j\otimes dx^i$$
Then by direct calculations and using~(\ref{basis}) it is obtained that:
\begin{equation}~\label{brackets}
[\frac{\delta}{\delta x^i},\frac{\delta}{\delta x^j}]=R_{ijk}\partial^k,\ \ \ \ [\partial^j,\frac{\delta}{\delta x^i}]=N_{ik}^j\partial^k
\end{equation}
where $R_{ijk}=\frac{\delta N_{jk}}{\delta x^i}-\frac{\delta N_{ik}}{\delta x^j}$ and $N_{ik}^j=\partial^jN_{ik}$. Now, let $\nabla$ be the Levi-Civita connection on Riemannian manifold $(T^*M,G)$. Then we can prove the following.
\begin{theorem}~\label{levi1}
Let $(M,K)$ be a Cartan space. Then the Levi-Civita connection $\nabla$ on $(T^*M,G)$ is locally expressed as follows:
$$
\left\{
\begin{array}{l}
\nabla_{\frac{\delta}{\delta x^i}}\frac{\delta}{\delta x^j}=\Gamma_{ij}^k\frac{\delta}{\delta x^k}+\frac{1}{2}(R_{ijk}+g_{ijk})\partial^k \cr
\nabla_{\frac{\delta}{\delta x^i}}\partial^j=\nabla_{\partial^j}\frac{\delta}{\delta x^i}-N_{ih}^j\partial^h=-\frac{1}{2}(g_i^{jk}+R_{ish}g^{hj}g^{sk})\frac{\delta}{\delta x^k}\\
+\frac{1}{2}(\frac{\delta g^{jk}}{\delta x^i}+\partial^k(N_{is}g^{sj})-\partial^j(N_{is}g^{sk}))g_{kh}\partial^h \cr
\nabla_{\partial^i}\partial^j=-\frac{1}{2}(\frac{\delta g^{ij}}{\delta x^k}+N_{ks}^ig^{sj}+N_{ks}^jg^{si})g^{kh}\frac{\delta}{\delta x^h}+\frac{1}{2}g_k^{ij}\partial^k
\end{array}
\right.
$$
where
$$\Gamma_{ij}^k=\frac{g^{kh}}{2}\{\frac{\delta g_{ih}}{\delta x^j}+\frac{\delta g_{jh}}{\delta x^i}-\frac{\delta g_{ij}}{\delta x^h}\}$$
$$g_{ijk}=g_{is}g_{jt}g_k^{st}=g_{is}g_{jt}g_{kh}g^{sth}=g_{is}g_{jt}g_{kh}\partial^h(g^{st})$$
\end{theorem}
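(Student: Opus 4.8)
The plan is to compute $\nabla$ directly from the Koszul formula, which for the Levi-Civita connection reads
$$2G(\nabla_X Y,Z)=X(G(Y,Z))+Y(G(X,Z))-Z(G(X,Y))+G([X,Y],Z)-G([X,Z],Y)-G([Y,Z],X),$$
and to read off the coefficients by letting $X,Y,Z$ range over the adapted frame $\{\frac{\delta}{\delta x^i},\partial^i\}$. All the required input is already assembled in the excerpt: from~(\ref{metric}) the only nonzero inner products are $G(\frac{\delta}{\delta x^i},\frac{\delta}{\delta x^j})=g_{ij}$ and $G(\partial^i,\partial^j)=g^{ij}$, the horizontal and vertical subbundles being $G$-orthogonal; the brackets are supplied by~(\ref{brackets}) together with $[\partial^i,\partial^j]=0$, since the $\partial^i=\frac{\partial}{\partial p_i}$ are commuting coordinate fields in the fibre.

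First I would treat the purely horizontal case $X=\frac{\delta}{\delta x^i}$, $Y=\frac{\delta}{\delta x^j}$. With $Z=\frac{\delta}{\delta x^k}$ every bracket term drops out, because each horizontal bracket $R_{abl}\partial^l$ is vertical and hence $G$-orthogonal to $\frac{\delta}{\delta x^k}$; only the three derivatives $\frac{\delta g_{jk}}{\delta x^i}+\frac{\delta g_{ik}}{\delta x^j}-\frac{\delta g_{ij}}{\delta x^k}$ survive, and the usual symmetrisation yields the horizontal coefficient $\Gamma^k_{ij}$ exactly as displayed. With $Z=\partial^k$ the two mixed metric terms vanish and so do both brackets that pair a vertical field against a horizontal one; what remains is $G([\frac{\delta}{\delta x^i},\frac{\delta}{\delta x^j}],\partial^k)=R_{ijl}g^{lk}$ together with $-\partial^k g_{ij}$, and converting the latter through $\partial^k g_{ij}=-g_{is}g_{jt}\partial^k g^{st}$ produces the Cartan combination $g_{ijk}$. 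Lowering the free index then gives the vertical part $\frac{1}{2}(R_{ijk}+g_{ijk})\partial^k$.

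Next, for the mixed derivative I would exploit the torsion-free identity $\nabla_{\frac{\delta}{\delta x^i}}\partial^j-\nabla_{\partial^j}\frac{\delta}{\delta x^i}=[\frac{\delta}{\delta x^i},\partial^j]=-N^j_{ih}\partial^h$, obtained from the second bracket of~(\ref{brackets}); this already explains the explicit $-N^j_{ih}\partial^h$ in the statement and shows that the two connections share the same horizontal part. Running Koszul with one horizontal and one vertical argument and letting $Z$ range over the frame then delivers the horizontal coefficient $-\frac{1}{2}(g_i^{jk}+R_{ish}g^{hj}g^{sk})$ — the Cartan piece $g_i^{jk}$ coming from the $\partial$-derivatives of the metric and the curvature piece from the $R$-bracket — and the stated vertical coefficient built from $\frac{\delta g^{jk}}{\delta x^i}$ and the $N$-terms $\partial^k(N_{is}g^{sj})$, $\partial^j(N_{is}g^{sk})$. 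Finally the purely vertical case $\nabla_{\partial^i}\partial^j$ follows from Koszul using $[\partial^i,\partial^j]=0$: with $Z=\frac{\delta}{\delta x^h}$ one meets $\frac{\delta g^{ij}}{\delta x^h}$ and the bracket-generated $N$-terms, while with $Z=\partial^k$ the Cartan term $g_k^{ij}$ appears.

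The main obstacle is computational bookkeeping rather than anything conceptual: one must repeatedly pass between derivatives of $g_{ij}$ and of its inverse via $\partial^h g_{ij}=-g_{is}g_{jt}\partial^h g^{st}$, recognise the Cartan-type contractions $g_{ijk}=g_{is}g_{jt}g_{kh}\partial^h g^{st}$ and $g_i^{jk}$, $g_k^{ij}$ inside the raw Koszul output, and carefully track the non-tensorial contributions $R_{ijk}$ and $N^j_{ik}$ that arise from the non-integrability of the horizontal distribution. I expect the delicate point to be the mixed term, where the horizontal coefficient fuses a Cartan piece $g_i^{jk}$ with a curvature piece $R_{ish}g^{hj}g^{sk}$; verifying the precise symmetrisation that separates these two, and confirming the index placement on the $N$-terms in the vertical coefficient, is where the most care is needed.
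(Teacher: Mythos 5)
Your proposal is correct and follows essentially the same route as the paper: the paper's proof is exactly the Koszul formula for the Levi-Civita connection evaluated on the adapted frame $\{\frac{\delta}{\delta x^i},\partial^i\}$, using the brackets~(\ref{brackets}) and the orthogonality of the horizontal and vertical distributions under $G$, followed by the same bookkeeping (including the identity $\partial^k g_{ij}=-g_{is}g_{jt}\partial^k g^{st}$) that you describe. The only difference is that the paper leaves all of this as a ``straightforward calculation,'' whereas you spell out how each coefficient emerges.
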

\begin{proof}
By using~(\ref{brackets}), definition of Levi-Civita connection
$$\left\{\begin{array}{l}
2G(\nabla_XY,Z)=XG(Y,Z)+YG(X,Z)-ZG(X,Y)\cr
-G([X,Z],Y)-G([Y,Z],X)+G([X,Y],Z)
\end{array}
\right.$$
and a straightforward calculation the proof is complete. \end{proof}

\section{Adapted frame on indicatrix bundle of a Cartan space}
Supposed that $(M,K)$ is a \emph{n}-dimensional Cartan space. In natural way, the vertical and horizontal distributions of $TT^*M$ are given by:
$$VT^*M=<\partial^1,...,\partial^n>\ ,\ \ \ HT^*M=<\frac{\delta}{\delta x^1},...,\frac{\delta}{\delta x^n}>$$
Consider the Riemannian manifold $(T^*M,G)$, then, orthogonal distribution to Liouville-Hamilton vector field $C^*$ in vertical distribution $VT^*M$ is denoted by $V'T^*M$. By definition of $V'T^*M$, it is easy to see that $V'T^*M$ is a foliation in $TT^*M$. Therefore, there is the local chart $(U,\varphi)$ on $T^*M$ such that:
$$TT^*M|_U=<\bar{\partial}^1,...,\bar{\partial}^{n-1},v_1,...,v_{n+1}>$$
where
$$V'T^*M|_U=<\bar{\partial}^1,...,\bar{\partial}^{n-1}>$$
It is obvious that $\bar{\partial}^a$ for all $a=1,...,n-1$ are vector fields in $VT^*M$ and they can be written as a linear combination of the natural basis $\{\partial^1,...,\partial^n\}$ of $VT^*M$ as follows:
$$\bar{\partial}^a=E_i^a\partial^i\ \ \ \forall a=1,...,n-1$$
where $E_i^a$ is the $(n-1)\times n$ matrix of maximum rank. The first property of this matrix is $E_i^ag^{ij}p_j=0$. Moreover, on $U$ the vertical distribution $VT^*M$ is locally spanned by:
\begin{equation}~\label{basis2}
\{C^*,\bar{\partial}^1,...,\bar{\partial}^{n-1}\}
\end{equation}
Then the almost complex structure $J$ acts on the basis~(\ref{basis2}) as follows:
$$J(\bar{\partial}^a)=E_j^ag^{ji}\frac{\delta}{\delta x^i}\ ,\ \ \ \xi=J(C^*)$$
We put $\bar{E}_a^i=E_j^bg_{ba}g^{ji}$ where $g^{ab}:=G(\bar{\partial}^a,\bar{\partial}^b)=E_i^ag^{ij}E_j^b$ and $(g_{ab})=(g^{ab})^{-1}$. Then, it is obtained that:
$$J(\bar{\partial}^a)=g^{ab}\bar{E}_b^i\frac{\delta}{\delta x^i}$$
Now, if we let $\frac{\bar{\delta}}{\bar{\delta}x^a}=\bar{E}_a^i\frac{\delta}{\delta x^i}$, then we obtain:
$$J(\bar{\partial}^a)=g^{ab}\frac{\bar{\delta}}{\bar{\delta}x^b}\ ,\ \ \ J(\frac{\bar{\delta}}{\bar{\delta}x^a})=g_{ab}\bar{\partial}^b$$ and we can set a new local vector fields of $TT^*M$ as follows:
\begin{equation}~\label{basis1}
TT^*M=<\xi,\frac{\bar{\delta}}{\bar{\delta}x^a},C^*,\bar{\partial}^a>
\end{equation}
and the Sasakian metric $G$ on $T^*M$ can be shown in these new local vector fields as follows:
\begin{equation}~\label{met.mat.}
G:=\left(%
\begin{array}{cccc}
  K^2 & 0 & 0 & 0\\
  0 & (g_{ab}) & 0 & 0\\
  0 & 0 & K^2 & 0\\
  0 & 0 & 0 & (g^{ab})\\
\end{array}%
\right)
\end{equation}
The Lie brackets of the vector fields in~(\ref{basis1}) are presented as follows:
$$
\left\{
\begin{array}{l}
\ [\frac{\bar{\delta}}{\bar{\delta}x^a},\frac{\bar{\delta}}{\bar{\delta}x^b}]=
(\frac{\bar{\delta}\bar{E}_b^i}{\bar{\delta}x^a}
-\frac{\bar{\delta}\bar{E}_a^i}{\bar{\delta}x^b})\frac{\delta}{\delta x^i}+\bar{E}_a^i\bar{E}_b^jR_{ijs}\partial^s,\\
\ [\frac{\bar{\delta}}{\bar{\delta}x^a},\bar{\partial}^b]=(\frac{\bar{\delta}E_i^b}{\bar{\delta}x^a}
-\bar{E}_a^kE_j^bN_{ki}^j)\partial^i-\bar{\partial}^b(\bar{E}_a^i)\frac{\delta}{\delta x^i},\\
\ [\bar{\partial}^a,\bar{\partial}^b]=(\partial^aE_i^b-\partial^bE_i^a)\partial^i,\\
\ [\frac{\bar{\delta}}{\bar{\delta}x^a},\xi]=(\bar{E}_a^jN_{jk}g^{ki}+p_j\frac{\bar{\delta}g^{ji}}{\bar{\delta}x^a}
-\xi(\bar{E}_a^i))\frac{\delta}{\delta x^i}\cr \hspace{1.8cm}+\bar{E}_a^ip_hg^{hj}R_{ijs}\partial^s,\\
\ [\bar{\partial}^a,\xi]=g^{ab}\frac{\bar{\delta}}{\bar{\delta}x^b}+(E_j^ap_kg^{kh}N_{hi}^j-\xi(E_i^a))
\partial^i,\\
\ [\frac{\bar{\delta}}{\bar{\delta}x^a},C^*]=-C^*(\bar{E}_a^i)\frac{\delta}{\delta x^i},\\
\ [\bar{\partial}^a,C^*]=\bar{\partial}^a-C^*(E_i^a)\partial^i,\\
\ [\xi,\xi]=[C^*,C^*]=[\xi,C^*]+\xi=0.
\end{array}
\right.
$$
Now, the Levi-Civita connection $\nabla$ on the Riemannian manifold $(T^*M,G)$ for the basis~(\ref{basis1}) is given by:
\begin{equation}~\label{levicivita}
\left\{
\begin{array}{l}
\nabla_{\frac{\bar{\delta}}{\bar{\delta}x^a}}\frac{\bar{\delta}}{\bar{\delta}x^b}=(\Gamma_{ab}^c+
\frac{\bar{\delta}\bar{E}_b^i}{\bar{\delta}x^a}E_i^c)\frac{\bar{\delta}}{\bar{\delta}x^c}+\frac{1}{2}
(R_{abc}-g_{abc})\bar{\partial}^c \cr +\frac{1}{2K^2}(p_ig^{ij}(\frac{\bar{\delta}E_j^c}{\bar{\delta}x^a}g_{cb}
+\frac{\bar{\delta}E_j^c}{\bar{\delta}x^b}g_{ca})-\bar{E}_a^i\bar{E}_b^j\xi(g_{ij}))\xi \cr
\nabla_{\bar{\partial}^a}\bar{\partial}^b=(\bar{\partial}^a(E_i^b)\bar{E}_d^i+\frac{1}{2}g_d^{ab})\bar{\partial}^d
-\frac{1}{K^2}g^{ab}C^* \\
-\frac{1}{2}E_i^aE_j^b\bar{E}_c^k(\frac{\delta g^{ij}}{\delta x^k}+N_{kh}^ig^{hj}+N_{kh}^jg^{hi})g^{cd}\frac{\bar{\delta}}{\bar{\delta}x^d} \cr
\nabla_{\frac{\bar{\delta}}{\bar{\delta}x^a}}\bar{\partial}^b=\frac{1}{2}(g_{ac}^b-R_{ac}^b)g^{cd}
\frac{\bar{\delta}}{\bar{\delta}x^d}-\frac{1}{2K^2}R_a^b\xi+\frac{\bar{\delta}E_i^b}{\bar{\delta}x^a}
\bar{E}_d^i\bar{\partial}^d \\
+\frac{1}{2}E_i^bE_j^c\bar{E}_a^k(\frac{\delta g^{ij}}{\delta x^k}+N_{kh}^jg^{hi}-N_{kh}^ig^{hj})g_{cd}\bar{\partial}^d \cr
\nabla_{\bar{\partial}^b}\frac{\bar{\delta}}{\bar{\delta}x^a}=\frac{1}{2}(g_{ac}^b-R_{ac}^b)g^{cd}
\frac{\bar{\delta}}{\bar{\delta}x^d}-\frac{1}{2K^2}(R_a^b+2\delta_a^b)\xi
\cr+\frac{1}{2}E_i^bE_j^c\bar{E}_a^k(\frac{\delta g^{ij}}{\delta x^k}+N_{kh}^jg^{hi}+N_{kh}^ig^{hj})g_{cd}\bar{\partial}^d+\bar{\partial}^b(\bar{E}_a^k)E_k^d
\frac{\bar{\delta}}{\bar{\delta}x^d}
\end{array}
\right.
\end{equation}
\begin{equation}~\label{levicivita1}
\left\{
\begin{array}{l}
\nabla_{\frac{\bar{\delta}}{\bar{\delta}x^a}}\xi=\frac{1}{2}(\bar{E}_a^i\bar{E}_c^j\xi(g_{ij})+p_j
\frac{\bar{\delta}g^{ji}}{\bar{\delta}x^c}\bar{E}_a^kg_{ik}+\bar{E}_a^i\bar{E}_c^hN_{ih}\cr-p_jg^{ji}
\frac{\bar{\delta}E_i^b}{\bar{\delta}x^a}g_{bc})g^{cd}\frac{\bar{\delta}}{\bar{\delta}x^d}+\frac{1}{2}R_{ad}
\bar{\partial}^d \cr
\nabla_{\xi}\frac{\bar{\delta}}{\bar{\delta}x^a}=\frac{1}{2}(\bar{E}_a^i\bar{E}_c^j\xi(g_{ij})+p_j
\frac{\bar{\delta}g^{ji}}{\bar{\delta}x^c}\bar{E}_a^kg_{ik}+\bar{E}_a^i\bar{E}_c^hN_{ih}\cr+p_jg^{ji}
\frac{\bar{\delta}E_i^b}{\bar{\delta}x^a}g_{bc})g^{cd}\frac{\bar{\delta}}{\bar{\delta}x^d}+\xi(\bar{E}_a^i)E_i^d
\frac{\bar{\delta}}{\bar{\delta}x^d}-\frac{1}{2}R_{ad}\bar{\partial}^d \cr
\nabla_{\bar{\partial}^a}\xi=(g^{ad}+\frac{1}{2}R_{bc}g^{ba}g^{cd})\frac{\bar{\delta}}{\bar{\delta}x^d} \cr
\nabla_{\xi}\bar{\partial}^a=\frac{1}{2}R_{bc}g^{ba}g^{cd}\frac{\bar{\delta}}{\bar{\delta}x^d}+\bar{E}_d^i(\xi(E_i^a)
-p_kg^{kh}E_s^aN_{hi}^s)\bar{\partial}^d
\end{array}
\right.
\end{equation}
\begin{equation}~\label{levicivita2}
\left\{
\begin{array}{l}
\nabla_{\frac{\bar{\delta}}{\bar{\delta}x^a}}C^*=\nabla_{C^*}\frac{\bar{\delta}}{\bar{\delta}x^a}-C^*(\bar{E}_a^i)E_i^d
\frac{\bar{\delta}}{\bar{\delta}x^d}=0 \cr
\nabla_{\bar{\partial}^a}C^*-\bar{\partial}^a=\nabla_{C^*}\bar{\partial}^a-C^*(E_i^a)\bar{E}_d^i\bar{\partial}^d=0 \cr
\nabla_{\xi}C^*=\nabla_{C^*}\xi-\xi=\nabla_{\xi}\xi=\nabla_{C^*}C^*-C^*=0
\end{array}
\right.
\end{equation}
where
$$R_{abc}=\bar{E}_a^i\bar{E}_b^j\bar{E}_c^kR_{ijk}=g_{cd}R_{ab}^d,\ \ \ R_a^b=R_{ac}g^{cb}=\bar{E}_a^i\bar{E}_c^jR_{ij}g^{cb}$$
$$g_{abc}=\bar{E}_a^i\bar{E}_b^j\bar{E}_c^kg_{ijk}=g_{cd}g_{ab}^d=g_{ad}g_{be}g_c^{de}$$
$$\Gamma_{ab}^c=\bar{E}_a^i\bar{E}_b^jE_k^c\Gamma_{ij}^k,\ \ \ N_{ab}^c=\bar{E}_a^i\bar{E}_b^jE_k^cN_{ij}^k$$
The c-\emph{indicatrix} bundle of $T^*M$ denoted by $I^*\!M(c)$ is defined as follows:
$$I^*\!M(c)=\{(x,p)\in T^*M | K(x,p)=c>0\}.$$
Where for each $c\in\textbf{R}^+$ normal vector field to $I^*\!M(c)$ is given by:
\begin{equation}~\label{grad}
grad(K)=\frac{p_i}{c}\partial^i=\frac{1}{c}C^*
\end{equation}
Therefore,
\begin{equation}~\label{basis3}
T(I^*\!M)=<\xi,\frac{\bar{\delta}}{\bar{\delta}x^a},\bar{\partial}^a>
\end{equation}
In following, the Levi-Civita connection and metric on $c$-indicatrix bundles are denoted by $\bar{\nabla}$ and $\bar{G}$, respectively, which $\bar{G}$ is the restriction of metric~(\ref{met.mat.}). In order to compute the components of Levi-Civita connection $\bar{\nabla}$ on indicatrix bundle $I^*\!M(c)$ for the basis~(\ref{basis3}) the \emph{Gauss Formula}~\cite{lee}:
$$\nabla_XY=\bar{\nabla}_XY+H(X,Y)$$
where $H$ is the \emph{second fundamental form} of $I^*\!M(c)$ is needed. It is obvious that all components in~(\ref{levicivita})--(\ref{levicivita2}) except $\nabla_{\bar{\partial}^a}\bar{\partial}^b$ are tangent to $I^*\!M(c)$. Therefore, $\bar{\nabla}$ is equal to $\nabla$ for the other components of~(\ref{levicivita})--(\ref{levicivita2}). Moreover, the curvature tensor $R$ of $\nabla$ defined by $R(X,Y)Z=\nabla_X\nabla_YZ-\nabla_Y\nabla_XZ-\nabla_{[X,Y]}Z$
is related to the curvature tensor $\bar{R}$ of $\bar{\nabla}$ in following equations:
$$
\left\{
\begin{array}{l}
R(\frac{\bar{\delta}}{\bar{\delta}x^a},\frac{\bar{\delta}}{\bar{\delta}x^b})\bar{\partial}^c
=\bar{R}(\frac{\bar{\delta}}{\bar{\delta}x^a},\frac{\bar{\delta}}{\bar{\delta}x^b})
\bar{\partial}^c+\frac{1}{K^2}R_{abe}g^{ec}C^*\\
R(\frac{\bar{\delta}}{\bar{\delta}x^a},\bar{\partial}^b)\frac{\bar{\delta}}{\bar{\delta}x^c}
=\bar{R}(\frac{\bar{\delta}}{\bar{\delta}x^a},\bar{\partial}^b)
\frac{\bar{\delta}}{\bar{\delta}x^c}+\frac{1}{2K^2}(R_{acd}-g_{acd})g^{db}C^*\\
R(\bar{\partial}^a,\bar{\partial}^b)
\bar{\partial}^c
=\bar{R}(\bar{\partial}^a,\bar{\partial}^b)
\bar{\partial}^c+\frac{1}{K^2}(g^{ac}\bar{\partial}^b-
g^{bc}\bar{\partial}^a)\\
R(\frac{\bar{\delta}}{\bar{\delta}x^a},\bar{\partial}^b)
\bar{\partial}^c
=\bar{R}(\frac{\bar{\delta}}{\bar{\delta}x^a},\bar{\partial}^b)
\bar{\partial}^c-\frac{1}{2K^2}\bar{E}_a^kE_i^bE_j^c(\frac{\delta g^{ij}}{\delta x^k}+N_{kh}^ig^{hj}\cr \hspace{2.5cm}+N_{kh}^jg^{hi})C^*\\
R(\frac{\bar{\delta}}{\bar{\delta}x^a},\bar{\partial}^b)\xi=
\bar{R}(\frac{\bar{\delta}}{\bar{\delta}x^a},\bar{\partial}^b)\xi+\frac{1}{2K^2}R_{ad}g^{db}C^*\\
R(\bar{\partial}^a,\xi)\frac{\bar{\delta}}{\bar{\delta}x^b}=
\bar{R}(\bar{\partial}^a,\xi)\frac{\bar{\delta}}{\bar{\delta}x^b}+\frac{1}{2K^2}R_{bd}g^{da}C^*\\
R(\frac{\bar{\delta}}{\bar{\delta}x^a},\xi)\bar{\partial}^b=
\bar{R}(\frac{\bar{\delta}}{\bar{\delta}x^a},\xi)\bar{\partial}^b+\frac{1}{K^2}R_{ad}g^{db}C^*
\end{array}
\right.
$$
for the other combinations of $\frac{\bar{\delta}}{\bar{\delta}x^a},\bar{\partial}^a$ and $\xi$, tensor fields $R$ and $\bar{R}$ coincide with each other.

\section{Foliations on $T^*M$}
Let $(M,K)$ be a Cartan space defined in Section 2. The purpose of this section is to prove some results of~\cite{bejancu} on six natural foliations in the cotangent bundle of a Cartan space. It is interesting to see, that a study of these foliations provides important information on the geometry of the Cartan spaces themselves. These six foliations are presented as follows:
\begin{enumerate}
\item{$C^*$: Liouville-Hamilton vector field.}
\item{$\xi$: defined by $\xi:=JC^*$.}
\item{$C^*\oplus\xi$}.
\item{$VT^*M$: defined by $VT^*M=<\partial^1,...,\partial^n>$.}
\item{$V'T^*M$: which is perpendicular to $C^*$ in $VT^*M$ with respect to the metric $G$ defined in~(\ref{metric}).}
\item{$V^{\perp}T^*M$: which is perpendicular to $C^*$ in $TT^*M$ with respect to the metric $G$ defined in~(\ref{metric}).}
\end{enumerate}
\begin{theorem}~\label{tot geo 2}
$C^*$, $\xi$ and $C^*\oplus\xi$ are three totally geodesic foliations on $(T^*M,G)$.
\end{theorem}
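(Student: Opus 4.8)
The plan is to invoke the standard characterization of a totally geodesic foliation: a foliation $F$ of $(T^*M,G)$ is totally geodesic if and only if each leaf is a totally geodesic submanifold, which by the Gauss formula $\nabla_X Y=\bar\nabla_X Y+H(X,Y)$ is equivalent to the vanishing of the second fundamental form $H$, i.e.\ to $\nabla_X Y$ being tangent to $F$ for all $X,Y$ tangent to $F$. Since $\nabla$ is torsion-free and each of the three distributions is integrable, this tangency condition is symmetric in $X,Y$ and reduces to checking the covariant derivatives of a spanning set. Throughout I would work on $T^*M\setminus\{0\}$, where $K>0$ and hence, by (\ref{met.mat.}), $C^*$ and $\xi=JC^*$ are nonvanishing and mutually orthogonal.

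First I would confirm that the three distributions are genuine foliations. The distributions spanned by $C^*$ and by $\xi$ are one-dimensional, hence automatically involutive. For $C^*\oplus\xi$, the bracket relation $[\xi,C^*]+\xi=0$ among the Lie brackets of the frame (\ref{basis1}) gives $[C^*,\xi]=\xi$, which lies in $C^*\oplus\xi$; thus this rank-two distribution is involutive and, by Frobenius, defines a foliation.

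Next I would read the needed covariant derivatives directly off (\ref{levicivita2}), which records the chain of equalities yielding
\[
\nabla_{C^*}C^*=C^*,\qquad \nabla_{\xi}\xi=0,\qquad \nabla_{C^*}\xi=\xi,\qquad \nabla_{\xi}C^*=0.
\]
For the foliation $C^*$, the single derivative $\nabla_{C^*}C^*=C^*$ is tangent, so it is totally geodesic; for $\xi$, likewise $\nabla_{\xi}\xi=0$ is tangent. For $C^*\oplus\xi$, all four derivatives above lie in the span of $C^*$ and $\xi$, so the second fundamental form vanishes identically and the foliation is totally geodesic.

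The computation presents essentially no obstacle, since the entire analytic burden has been absorbed into the connection formulas (\ref{levicivita2}), whose derivation from the Koszul formula and the brackets of (\ref{basis1}) is where the real work lies. The only genuine point to watch is the integrability of $C^*\oplus\xi$: the tangency characterization of total geodesy may only be applied once the distribution is known to be a foliation, and this integrability is exactly what the bracket identity $[\xi,C^*]+\xi=0$ supplies.
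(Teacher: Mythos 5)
Your proposal is correct and takes essentially the same route as the paper: both read the identities $\nabla_{C^*}C^*=C^*$, $\nabla_{C^*}\xi=\xi$, $\nabla_{\xi}C^*=0$, $\nabla_{\xi}\xi=0$ directly off~(\ref{levicivita2}) and observe that each derivative is tangent to the relevant distribution, so the second fundamental forms vanish. Your additional checks --- the involutivity of $C^*\oplus\xi$ via $[C^*,\xi]=\xi$ and the reduction of the tangency condition to a spanning set --- are details the paper leaves implicit, and they are handled correctly.
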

\begin{proof}
According to~(\ref{levicivita2})
$$\nabla_{\xi}C^*=\nabla_{C^*}\xi-\xi=\nabla_{\xi}\xi=\nabla_{C^*}C^*-C^*=0$$
which this shows they are totally geodesic. \end{proof}


\begin{theorem}~\label{bundle like}
The lifted metric $G$ defined in~(\ref{metric}) is bundle-like for the vertical foliation $VT^*M$ if and only if $(g^{ij})$ is a Riemannian metric on $M$.
\end{theorem}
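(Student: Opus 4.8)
The plan is to use the standard differential criterion for a bundle-like metric (Reinhart's condition, as developed in~\cite{bejancu}): a metric $G$ is bundle-like for a foliation with tangent distribution $D$ if and only if
$$G(\nabla_X V, Y) + G(\nabla_Y V, X) = 0, \qquad \forall\, X, Y \in \Gamma(D^\perp),\ V \in \Gamma(D).$$
One first checks that this expression is $C^\infty(T^*M)$-linear in each of $X$, $Y$, $V$ (using $G(V,X)=G(V,Y)=0$), so it suffices to verify it on a local frame. Here the foliation is the vertical one, $D=VT^*M=\langle\partial^1,\dots,\partial^n\rangle$, and from the block-diagonal form~(\ref{metric}) of $G$ its orthogonal complement is exactly the horizontal distribution, $D^\perp=HT^*M=\langle\frac{\delta}{\delta x^i}\rangle$. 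Thus I only need to test the criterion on $X=\frac{\delta}{\delta x^i}$, $Y=\frac{\delta}{\delta x^j}$, $V=\partial^l$.

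Next I would substitute the middle formula of Theorem~\ref{levi1} for $\nabla_{\frac{\delta}{\delta x^i}}\partial^l$. Since $G$ carries no mixed horizontal-vertical terms, only the horizontal component $-\tfrac12(g_i^{lk}+R_{ish}g^{hl}g^{sk})\frac{\delta}{\delta x^k}$ survives when paired against $\frac{\delta}{\delta x^j}$, giving
$$G\Big(\nabla_{\frac{\delta}{\delta x^i}}\partial^l,\tfrac{\delta}{\delta x^j}\Big)=-\tfrac12\big(g_i^{lk}g_{kj}+R_{ijh}g^{hl}\big),$$
after contracting $R_{ish}g^{sk}g_{kj}=R_{ijh}$. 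Adding the $i\leftrightarrow j$ term, the decisive simplification is that the curvature contribution cancels, because $R_{ijh}=-R_{jih}$ by the very definition $R_{ijk}=\frac{\delta N_{jk}}{\delta x^i}-\frac{\delta N_{ik}}{\delta x^j}$ given in~(\ref{brackets}). Hence the bundle-like criterion collapses to the single algebraic condition $g_i^{lk}g_{kj}+g_j^{lk}g_{ki}=0$ for all $i,j,l$.

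Finally I would lower the free index $l$ with $g_{lm}$ and identify the outcome with the fully covariant Cartan tensor. Writing $g_i^{lk}=g_{ih}\partial^h g^{lk}$ and using the total symmetry of $\partial^h g^{lk}$ in $(h,l,k)$, one obtains $g_i^{lk}g_{kj}g_{lm}=g_{ijm}$, and similarly for the second term, so the condition becomes $g_{ijm}=0$ for all $i,j,m$. Because the matrices $(g_{is})$ are invertible, this is equivalent to the vanishing of the Cartan tensor $\partial^h g^{st}\equiv 0$, that is, to $g^{ij}$ being independent of the momenta $p_i$ — which is precisely the statement that $(g^{ij})$ is a Riemannian metric on $M$. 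I expect the only genuine obstacle to lie in this last index bookkeeping, namely recognizing that $g_i^{lk}g_{kj}$ symmetrizes to $g_{ijm}$; the cancellation of the curvature terms, by contrast, is immediate from the antisymmetry of $R_{ijk}$.
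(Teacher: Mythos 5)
Your proposal is correct and follows essentially the same route as the paper: the paper applies the bundle-like (Reinhart) criterion in the equivalent form $G(\nabla_{\frac{\delta}{\delta x^i}}\frac{\delta}{\delta x^j}+\nabla_{\frac{\delta}{\delta x^j}}\frac{\delta}{\delta x^i},\partial^k)=0$, reads off the vertical component $\frac{1}{2}(R_{ijk}+g_{ijk})\partial^k$ from Theorem~\ref{levi1}, and concludes $g_{ijk}=0$ by exactly the cancellation you invoke (antisymmetry of $R_{ijk}$ in $i,j$ versus symmetry of the Cartan tensor). Your version differs only in using the dual pairing --- the horizontal part of $\nabla_{\frac{\delta}{\delta x^i}}\partial^l$ against $\frac{\delta}{\delta x^j}$, which by metric compatibility gives the same quantities up to sign --- at the cost of the extra index identity $g_i^{lk}g_{kj}g_{lm}=g_{ijm}$, which you verify correctly.
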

\begin{proof}
With respect to bundle-like condition (see~\cite{bejancu book}), $G$ is bundle-like for $VT^*M$ if and only if
$$G(\nabla_{\frac{\delta}{\delta x^i}}\frac{\delta}{\delta x^j}+\nabla_{\frac{\delta}{\delta x^j}}\frac{\delta}{\delta x^i},\partial^k)=0\ \ \ \forall i,j,k=1,...,n$$
Then, by Theorem~\ref{levi1}, it is deduced that $G$ is bundle-like for $VT^*M$ if and only if $g_{ijk}=0$. This completes the proof. \end{proof}

By using the basis~(\ref{basis1}) and~(\ref{levicivita})--(\ref{levicivita2}) the following theorem is obtained:

\begin{theorem}~\label{bundle like1}
The lifted metric $G$ defined in~(\ref{metric}) is bundle-like for the foliation $V'T^*M$ if and only if $(g^{ij})$ is a Riemannian metric on $M$.
\end{theorem}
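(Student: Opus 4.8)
The plan is to translate the bundle-like condition into a computation in the adapted frame~(\ref{basis1}) and then show it reduces to exactly the same tensorial condition as in Theorem~\ref{bundle like}. Recall that $V'T^*M$ is locally spanned by the vertical vectors $\bar{\partial}^1,\dots,\bar{\partial}^{n-1}$, while its complement in $TT^*M$ is spanned by $\{\xi,\frac{\bar{\delta}}{\bar{\delta}x^a},C^*\}$. By the characterization of bundle-like metrics (see~\cite{bejancu book}), $G$ is bundle-like for $V'T^*M$ if and only if, for every leaf-tangent pair $\bar{\partial}^a,\bar{\partial}^b$ and every vector $W$ in the transverse complement, one has $G(\nabla_{\bar{\partial}^a}W+\nabla_{\bar{\partial}^b}W,\ \cdot\ )$ vanishing in the appropriate sense; concretely, this amounts to requiring that the symmetrized covariant derivative of the transverse vector fields along the leaf have no component transverse to the foliation. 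So first I would list the transverse generators $\xi,\frac{\bar{\delta}}{\bar{\delta}x^c},C^*$ and, for each, read off $\nabla_{\bar{\partial}^a}(\,\cdot\,)$ from~(\ref{levicivita})--(\ref{levicivita2}).

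Next I would extract the relevant components. From~(\ref{levicivita2}) we have $\nabla_{\bar{\partial}^a}C^*=\bar{\partial}^a+C^*(E_i^a)\bar{E}_d^i\bar{\partial}^d$, which is purely vertical (lies along the leaf and along $\bar{\partial}^d$), so the $C^*$-direction contributes nothing transverse after symmetrization. From~(\ref{levicivita1}), $\nabla_{\bar{\partial}^a}\xi=(g^{ad}+\tfrac12 R_{bc}g^{ba}g^{cd})\frac{\bar{\delta}}{\bar{\delta}x^d}$, whose transverse part involves the curvature term $R_{bc}$; I expect the $g^{ad}$ piece to be symmetric in $a,d$ and hence harmless, leaving only the skew part of $R_{bc}$ to be examined. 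The decisive term comes from $\nabla_{\bar{\partial}^a}\frac{\bar{\delta}}{\bar{\delta}x^c}$, whose $\frac{\bar{\delta}}{\bar{\delta}x^d}$-component in~(\ref{levicivita}) is $\tfrac12(g_{cd}^{\,a}-R_{cd}^{\,a})g^{\cdots}$ plus the connection-frame terms; the genuinely tensorial obstruction here is the Cartan tensor piece $g_{abc}$, while the frame-dependent pieces $\frac{\bar{\delta}E}{\bar{\delta}x}$ and $R_{abc}$ should cancel upon symmetrizing in the two leaf indices $a,b$ (since $R_{abc}$ is skew in $a,b$).

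The main obstacle I anticipate is bookkeeping: I must symmetrize $\nabla_{\bar{\partial}^a}W$ in the leaf index against $\nabla_{\bar{\partial}^b}W$ and carefully separate the contributions that are genuine tensors on $M$ from those that depend on the auxiliary frame matrices $E_i^a,\bar{E}_a^i$. The key algebraic fact I will use is the identity $g_{abc}=\bar{E}_a^i\bar{E}_b^j\bar{E}_c^k g_{ijk}$ recorded after~(\ref{levicivita2}), together with the total symmetry of the Cartan tensor $g_{ijk}=g_{is}g_{jt}g_{kh}\partial^h(g^{st})$ from Theorem~\ref{levi1}. Because $g_{abc}$ is the frame-projection of $g_{ijk}$ and the $E$-matrix has maximal rank with $E_i^a g^{ij}p_j=0$, the vanishing of all the symmetrized transverse components will be equivalent to $g_{ijk}=0$. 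Finally, $g_{ijk}=0$ says precisely that $\partial^h g^{st}=0$, i.e. the metric $(g^{ij})$ is independent of the momenta $p$ and therefore descends to a Riemannian metric on $M$; this is the same conclusion as in Theorem~\ref{bundle like}, and I would close the argument by noting that the transverse curvature contributions $R_{bc}$ are themselves forced to vanish (or symmetrize away) once $g_{ijk}=0$, so no extra condition beyond the Riemannian one survives.
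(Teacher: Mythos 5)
Your proposal goes wrong at the very first step: you have the bundle-like criterion backwards. For the foliation $V'T^*M=\langle\bar{\partial}^1,\dots,\bar{\partial}^{n-1}\rangle$ with orthogonal complement spanned by $\{\xi,\frac{\bar{\delta}}{\bar{\delta}x^a},C^*\}$, the metric $G$ is bundle-like iff the transverse metric is holonomy invariant, i.e. $(\mathcal{L}_ZG)(X,Y)=0$ for $Z$ tangent to the leaves and $X,Y$ in the complement, which by torsion-freeness and metric compatibility is equivalent to
$$G(\nabla_XY+\nabla_YX,\bar{\partial}^c)=0,\qquad X,Y\in\{\xi,\tfrac{\bar{\delta}}{\bar{\delta}x^a},C^*\}.$$
This is exactly the form used in the proof of Theorem~\ref{bundle like}: one differentiates \emph{transverse} vectors along \emph{transverse} vectors and pairs the symmetrized result with a \emph{leaf-tangent} vector. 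You do the opposite: you differentiate the transverse generators along the leaf directions, $\nabla_{\bar{\partial}^a}W$, and demand that the ``symmetrized'' result have no transverse component. That is not the bundle-like condition, and taken literally it fails identically: by~(\ref{levicivita}) the $\xi$-component of $\nabla_{\bar{\partial}^b}\frac{\bar{\delta}}{\bar{\delta}x^a}$ is $-\frac{1}{2K^2}(R_a^b+2\delta_a^b)$, and by~(\ref{levicivita1}) the $\frac{\bar{\delta}}{\bar{\delta}x^d}$-component of $\nabla_{\bar{\partial}^a}\xi$ contains the positive-definite term $g^{ad}$; neither $\delta_a^b$ nor $g^{ad}$ can vanish or be removed by any symmetrization. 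Moreover your symmetric/skew heuristic is inverted: in the genuine criterion one symmetrizes over the two transverse arguments, so the \emph{skew} contributions (such as $R_{abc}$, skew in $a,b$) cancel automatically while the \emph{symmetric} ones survive as the obstruction. With your reading (``symmetric pieces are harmless'') the Cartan term $g_{abc}$, which is totally symmetric, would be discarded as well and no condition would remain.

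The computation the theorem actually requires is the one the paper does: all pairs involving $C^*$ or the pair $(\xi,\frac{\bar{\delta}}{\bar{\delta}x^a})$ contribute zero --- in the latter case because the terms $\pm\frac{1}{2}R_{ad}\bar{\partial}^d$ in~(\ref{levicivita1}) cancel in the symmetrized sum, \emph{not} because $R_{ad}$ vanishes; your closing claim that the curvature contributions ``are forced to vanish'' once $g_{ijk}=0$ is false (a Riemannian metric has no reason to satisfy $R_{ij}=0$), they simply drop out of the criterion. The only surviving pair is the horizontal one, where~(\ref{levicivita}) and the skewness of $R_{abc}$ give
$$G(\nabla_{\frac{\bar{\delta}}{\bar{\delta}x^a}}\frac{\bar{\delta}}{\bar{\delta}x^b}+\nabla_{\frac{\bar{\delta}}{\bar{\delta}x^b}}\frac{\bar{\delta}}{\bar{\delta}x^a},\bar{\partial}^c)=-g_{abd}g^{dc}.$$
Only the last step of your proposal points in the right direction: $g_{abc}=\bar{E}_a^i\bar{E}_b^j\bar{E}_c^kg_{ijk}=0$ is indeed equivalent to $g_{ijk}=0$, but to justify this you need more than maximal rank of $(E_i^a)$ --- the matrices only span the hyperplane orthogonal to $p$, so you must invoke the $0$-homogeneity of $g^{ij}$ (Euler's relation $p_kg^{ijk}=0$) to see that the Cartan tensor already annihilates $p$ in every index, and hence is determined by that projection. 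Then $g_{ijk}=0$ means $\partial^kg^{ij}=0$, i.e. $(g^{ij})$ is a Riemannian metric on $M$.
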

\begin{proof}
From~(\ref{levicivita})-(\ref{levicivita2}), it is obtained:
$$G(\nabla_{\xi}\frac{\bar{\delta}}{\bar{\delta}x^a}+\nabla_{\frac{\bar{\delta}}{\bar{\delta}x^a}}\xi,\bar{\partial}^c)=
G(\nabla_{C^*}\frac{\bar{\delta}}{\bar{\delta}x^a}+\nabla_{\frac{\bar{\delta}}{\bar{\delta}x^a}}C^*,\bar{\partial}^c)$$
$$=G(\nabla_{\xi}C^*+\nabla_{C^*}\xi,\bar{\partial}^c)=G(2\nabla_{C^*}C^*,\bar{\partial}^c)
=G(2\nabla_{\xi}\xi,\bar{\partial}^c)=0$$
and
$$G(\nabla_{\frac{\bar{\delta}}{\bar{\delta}x^a}}\frac{\bar{\delta}}{\bar{\delta}x^b}
+\nabla_{\frac{\bar{\delta}}{\bar{\delta}x^b}}\frac{\bar{\delta}}{\bar{\delta}x^a},\bar{\partial}^c)=-g_{abd}g^{dc}$$
Therefore, the lifted metric $G$ defined in~(\ref{metric}) is bundle-like for the foliation $V'T^*M$ if and only if $g_{abd}=0$ and it is equivalent to $g^{ijk}=0$. This completes the proof. \end{proof}

\begin{theorem}~\label{C^*}
The lifted metric $G$ defined in~(\ref{metric}) is not bundle-like for the foliation $C^*$, or $C^*$ is not a Killing vector field of metric $G$.
\end{theorem}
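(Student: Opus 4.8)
The plan is to use the infinitesimal criterion for both properties at once. Recall that $C^*$ is a Killing field for $G$ exactly when $\mathcal{L}_{C^*}G=0$, whereas $G$ is bundle-like for the one-dimensional foliation $C^*$ exactly when $\mathcal{L}_{C^*}G$ vanishes upon restriction to the normal distribution $V^{\perp}T^*M=\langle\xi,\frac{\bar{\delta}}{\bar{\delta}x^a},\bar{\partial}^a\rangle$. Since a Killing field automatically renders its metric bundle-like for its own flow, the disjunction collapses: it suffices to exhibit a single pair of orthogonal test vectors on which $\mathcal{L}_{C^*}G$ is nonzero, and this destroys bundle-likeness (and hence the Killing property) simultaneously.

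First I would rewrite the Lie derivative through the Levi-Civita connection via the standard identity $(\mathcal{L}_{C^*}G)(Y,Z)=G(\nabla_Y C^*,Z)+G(\nabla_Z C^*,Y)$, which holds because $\nabla$ is torsion-free and metric. The covariant derivatives of $C^*$ along the frame~(\ref{basis1}) are already recorded in~(\ref{levicivita2}); reading them off gives $\nabla_{C^*}C^*=C^*$, $\nabla_\xi C^*=0$, $\nabla_{\frac{\bar{\delta}}{\bar{\delta}x^a}}C^*=0$ and $\nabla_{\bar{\partial}^a}C^*=\bar{\partial}^a$, so no fresh connection computation is needed.

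Next I would simply evaluate. Taking $Y=\bar{\partial}^a$ and $Z=\bar{\partial}^b$, both of which lie in $V^{\perp}T^*M$ by the block-diagonal form~(\ref{met.mat.}), yields $(\mathcal{L}_{C^*}G)(\bar{\partial}^a,\bar{\partial}^b)=G(\bar{\partial}^a,\bar{\partial}^b)+G(\bar{\partial}^b,\bar{\partial}^a)=2g^{ab}$, which is nonzero since $(g^{ab})$ is positive definite. This already shows $G$ is not bundle-like for the foliation $C^*$, and therefore $C^*$ cannot be Killing. For the Killing assertion one may alternatively argue intrinsically from $\nabla_{C^*}C^*=C^*$, which forces $(\mathcal{L}_{C^*}G)(C^*,C^*)=2G(C^*,C^*)=2K^2\neq0$; this is nothing but the homogeneity relation $C^*(K^2)=2K^2$, expressing that the fibrewise norm of $C^*$ grows along its own flow.

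There is essentially no obstacle here once~(\ref{levicivita2}) is available: the argument reduces to reading off one component of $\nabla C^*$ and invoking positive-definiteness of $(g^{ab})$ together with the $1$-homogeneity of $K$. The only point requiring care is confirming that the test vectors $\bar{\partial}^a$ genuinely belong to the orthogonal complement of $C^*$, which is immediate from~(\ref{met.mat.}); this orthogonality is precisely what separates the present negative result from Theorems~\ref{bundle like} and~\ref{bundle like1}, where bundle-likeness could be attained.
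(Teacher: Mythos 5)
Your proposal is correct and follows essentially the same route as the paper: both arguments reduce to computing $(\mathcal{L}_{C^*}G)(\bar{\partial}^a,\bar{\partial}^b)=2g^{ab}\neq 0$ on vectors orthogonal to $C^*$, the only cosmetic difference being that you read $\nabla_{\bar{\partial}^a}C^*=\bar{\partial}^a$ directly from~(\ref{levicivita2}) while the paper converts to $-G(C^*,\nabla_{\bar{\partial}^a}\bar{\partial}^b)$ via metric compatibility and uses~(\ref{levicivita}). Your added remark that $(\mathcal{L}_{C^*}G)(C^*,C^*)=2K^2\neq 0$, and the explicit observation that failure of bundle-likeness already rules out the Killing property, are harmless refinements of the same argument.
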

\begin{proof}
From~(\ref{levicivita}), it is obtain that:
$$\mathcal{L}_{C^*}G(\bar{\partial}^a,\bar{\partial}^b)=G(\nabla_{\bar{\partial}^a}C^*,\bar{\partial}^b)
+G(\bar{\partial}^a,\nabla_{\bar{\partial}^b}C^*)$$
$$=-G(C^*,\nabla_{\bar{\partial}^a}\bar{\partial}^b)-G(\nabla_{\bar{\partial}^b}\bar{\partial}^a,C^*)=2g^{ab}$$
Therefore, $C^*$ is not a Killing vector field and $G$ is not bundle-like for $C^*$. \end{proof}

\begin{theorem}~\label{tot geo 1}
The vertical distribution $VT^*M$ is totally geodesic if and only if the following holds:
$$\frac{\delta g^{ij}}{\delta x^k}+N_{ks}^ig^{sj}+N_{ks}^jg^{si}=0$$
\end{theorem}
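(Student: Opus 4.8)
The plan is to characterize total geodesy of the vertical distribution $VT^*M$ directly from the defining condition that $\nabla_XY$ remains in $VT^*M$ whenever $X,Y$ are sections of $VT^*M$. Since $VT^*M=\langle\partial^1,\dots,\partial^n\rangle$, it suffices to test this on the generating vector fields, i.e.\ to examine the horizontal component of $\nabla_{\partial^i}\partial^j$ for all $i,j$. A distribution is totally geodesic precisely when the horizontal part of the covariant derivative of any pair of its sections vanishes, so the whole statement reduces to reading off when the $\frac{\delta}{\delta x^k}$-component of $\nabla_{\partial^i}\partial^j$ is zero.

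First I would invoke Theorem~\ref{levi1}, which gives
$$\nabla_{\partial^i}\partial^j=-\frac{1}{2}\Big(\frac{\delta g^{ij}}{\delta x^k}+N_{ks}^ig^{sj}+N_{ks}^jg^{si}\Big)g^{kh}\frac{\delta}{\delta x^h}+\frac{1}{2}g_k^{ij}\partial^k.$$
The second term $\tfrac12 g_k^{ij}\partial^k$ is already vertical and plays no role in the obstruction; the first term is the horizontal component. Thus $VT^*M$ is totally geodesic if and only if this horizontal term vanishes for all $i,j$, that is
$$\Big(\frac{\delta g^{ij}}{\delta x^k}+N_{ks}^ig^{sj}+N_{ks}^jg^{si}\Big)g^{kh}=0\qquad\forall\,i,j,h.$$

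Finally I would remove the factor $g^{kh}$: since $(g^{kh})$ is the inverse of the positive-definite Hessian $(g_{ij})$ it is invertible, so contracting the displayed identity with $(g_{hk})$ shows that the bracketed expression itself must vanish for all $i,j,k$. This yields exactly
$$\frac{\delta g^{ij}}{\delta x^k}+N_{ks}^ig^{sj}+N_{ks}^jg^{si}=0,$$
as claimed. The argument is essentially a one-line extraction from Theorem~\ref{levi1}, so there is no genuine analytic obstacle; the only point requiring care is the routine verification that testing on the coordinate frame $\{\partial^i\}$ suffices — one must note that for arbitrary vertical sections $X=X_i\partial^i$, $Y=Y_j\partial^j$ the Leibniz expansion of $\nabla_XY$ contributes horizontal terms only through the $\nabla_{\partial^i}\partial^j$ pieces, since $\partial^i(Y_j)\partial^j$ is vertical. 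This reduction is what makes the frame computation legitimate.
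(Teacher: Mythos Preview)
Your argument is correct and follows exactly the paper's approach: the paper's proof is simply ``It is a straight conclusion of Theorem~\ref{levi1}'', and you have spelled out that straightforward extraction (including the harmless extra justifications about invertibility of $(g^{kh})$ and reduction to the coordinate frame). Nothing is missing or different in substance.
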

\begin{proof}
It is a straight conclusion of Theorem~\ref{levi1}. \end{proof}

\begin{theorem}~\label{tot. geo.}
The foliations $V'T^*M$ and $V^{\perp}T^*M$ are not totally geodesic in Riemannian manifold $(T^*M,G)$.
\end{theorem}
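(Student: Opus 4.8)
The plan is to use the standard criterion that a foliation with tangent distribution $D$ is totally geodesic precisely when its second fundamental form vanishes, i.e. when $(\nabla_X Y)^{\perp}=0$ for all $X,Y$ tangent to $D$. The decisive observation is that $C^*$ lies in the normal bundle of \emph{both} distributions: for $V^{\perp}T^*M$ the orthogonal complement in $TT^*M$ is exactly $\langle C^*\rangle$ by definition, while for $V'T^*M$ the normal bundle is $\langle C^*,\xi,\frac{\bar{\delta}}{\bar{\delta}x^a}\rangle$, which again contains $C^*$. Hence in each case it suffices to produce tangent vector fields $X,Y$ with $G(\nabla_X Y,C^*)\neq 0$.

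First I would take $X=Y=\bar{\partial}^a$, which is tangent to both $V'T^*M$ and $V^{\perp}T^*M$. Reading off the expression for $\nabla_{\bar{\partial}^a}\bar{\partial}^b$ in~(\ref{levicivita}), the only term lying along $C^*$ is $-\frac{1}{K^2}g^{ab}C^*$; since the metric matrix~(\ref{met.mat.}) gives $G(C^*,C^*)=K^2$ and $G(C^*,\cdot)=0$ on the remaining frame fields, this yields
$$G(\nabla_{\bar{\partial}^a}\bar{\partial}^b,C^*)=-g^{ab}.$$

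Because $(g^{ab})=G(\bar{\partial}^a,\bar{\partial}^b)$ is positive-definite, the diagonal entries satisfy $g^{aa}>0$ (no summation), so $G(\nabla_{\bar{\partial}^a}\bar{\partial}^a,C^*)=-g^{aa}\neq 0$. Thus the second fundamental form of each distribution, evaluated on the tangent pair $(\bar{\partial}^a,\bar{\partial}^a)$, has a nonzero $C^*$-component, and neither $V'T^*M$ nor $V^{\perp}T^*M$ can be totally geodesic. I do not expect a genuine obstacle here: the whole argument reduces to reading the single coefficient $-\frac{1}{K^2}g^{ab}$ off~(\ref{levicivita}). The only point requiring care is the correct identification of the normal bundles of the two distributions, together with the observation that both contain $C^*$, so that the same computation disposes of both cases simultaneously.
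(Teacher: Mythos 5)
Your proof is correct and follows essentially the same route as the paper: the paper likewise reads off $H(\bar{\partial}^a,\bar{\partial}^b)=-\frac{1}{K^2}g^{ab}C^*$ from~(\ref{levicivita}) and concludes it cannot vanish. Your version merely adds the (welcome) explicit details that $C^*$ lies in the normal bundle of both distributions and that positive-definiteness of $(g^{ab})$ forces $g^{aa}\neq 0$.
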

\begin{proof}
From~(\ref{levicivita}), we obtain
$H(\bar{\partial}^a,\bar{\partial}^b)=-\frac{1}{K^2}g^{ab}C^*$
which it cannot be vanish. \end{proof}

\begin{theorem}~\label{umblical}
The foliation $V'T^*M$ is a totally umbilical foliation of $TT^*M$.
\end{theorem}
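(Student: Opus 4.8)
The plan is to recall the definition of a totally umbilical foliation and verify it directly from the Levi-Civita connection formulas in~(\ref{levicivita})--(\ref{levicivita2}). Recall that a foliation is totally umbilical if its second fundamental form $H$ is proportional to the induced metric via a global normal vector field; that is, there exists a vector field $N$ in the normal bundle such that $H(X,Y) = G(X,Y)\,N$ for all sections $X,Y$ of the distribution. For the foliation $V'T^*M = \langle \bar{\partial}^1,\dots,\bar{\partial}^{n-1}\rangle$, the tangent leaves are spanned by the $\bar{\partial}^a$, so the task reduces to computing $H(\bar{\partial}^a,\bar{\partial}^b)$, which is the component of $\nabla_{\bar{\partial}^a}\bar{\partial}^b$ normal to $V'T^*M$.

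First I would read off $\nabla_{\bar{\partial}^a}\bar{\partial}^b$ from the second line of~(\ref{levicivita}). The normal bundle of $V'T^*M$ inside $TT^*M$ is spanned by $\{\xi, \frac{\bar{\delta}}{\bar{\delta}x^a}, C^*\}$, so I would discard the $\bar{\partial}^d$-component and retain only the terms lying in this normal distribution. From~(\ref{levicivita}) the $\frac{\bar{\delta}}{\bar{\delta}x^d}$-term and the $C^*$-term survive; however, Theorem~\ref{tot. geo.} has already identified that $H(\bar{\partial}^a,\bar{\partial}^b) = -\frac{1}{K^2}g^{ab}C^*$ as the normal part relevant to the indicatrix computation, and the key observation is that the $\frac{\bar{\delta}}{\bar{\delta}x^d}$-term vanishes precisely when one projects correctly onto the leaf geometry of $V'T^*M$ (this term is tangent to the larger distribution $V^\perp T^*M$ and does not contribute to the normal component measured against $G(\bar{\partial}^a,\bar{\partial}^b) = g^{ab}$).

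The central computation is therefore to confirm that
\[
H(\bar{\partial}^a,\bar{\partial}^b) = -\frac{1}{K^2}\,g^{ab}\,C^* = G(\bar{\partial}^a,\bar{\partial}^b)\,\left(-\frac{1}{K^2}C^*\right),
\]
where I have used the metric matrix~(\ref{met.mat.}), which gives $G(\bar{\partial}^a,\bar{\partial}^b) = g^{ab}$. Setting $N := -\frac{1}{K^2}C^*$, which is a globally defined normal vector field (it is a nonzero multiple of the Liouville-Hamilton field $C^*$, normal to $V'T^*M$ by construction since $C^*$ is $G$-orthogonal to each $\bar{\partial}^a$), this exhibits $H$ in the required umbilical form $H(X,Y) = G(X,Y)N$.

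The main obstacle will be justifying that the surviving $\frac{\bar{\delta}}{\bar{\delta}x^d}$-term in the expression for $\nabla_{\bar{\partial}^a}\bar{\partial}^b$ does not obstruct umbilicity. I would handle this by checking that this term is antisymmetric, or otherwise vanishes, under the symmetrization $H(\bar{\partial}^a,\bar{\partial}^b)=\frac{1}{2}(\nabla_{\bar{\partial}^a}\bar{\partial}^b+\nabla_{\bar{\partial}^b}\bar{\partial}^a)^{\perp}$ once restricted to the normal directions of $V'T^*M$; alternatively, one absorbs the remaining horizontal component into the connection $\bar{\nabla}$ of the ambient umbilical-leaf structure, so that only the $C^*$-direction contributes to the genuine normal part. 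Once this term is accounted for, the proportionality with factor $-\frac{1}{K^2}$ independent of the indices $a,b$ completes the proof.
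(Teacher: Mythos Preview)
Your approach is exactly the paper's: read off $\nabla_{\bar{\partial}^a}\bar{\partial}^b$ from~(\ref{levicivita}), project onto the normal bundle of $V'T^*M$, and observe that the $C^*$-component equals $-\tfrac{1}{K^2}g^{ab}C^*=G(\bar{\partial}^a,\bar{\partial}^b)\bigl(-\tfrac{1}{K^2}C^*\bigr)$. The paper's proof is literally this one line.

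The difficulty you flag, however, is real and your proposed fixes do not work. The full normal bundle of $V'T^*M$ in $TT^*M$ is $\langle C^*,\xi,\tfrac{\bar{\delta}}{\bar{\delta}x^a}\rangle$, and the expression in~(\ref{levicivita}) gives a genuine $\tfrac{\bar{\delta}}{\bar{\delta}x^d}$-component
\[
-\tfrac{1}{2}E_i^aE_j^b\bar{E}_c^k\Bigl(\tfrac{\delta g^{ij}}{\delta x^k}+N_{kh}^ig^{hj}+N_{kh}^jg^{hi}\Bigr)g^{cd}\,\tfrac{\bar{\delta}}{\bar{\delta}x^d}.
\]
This term is \emph{symmetric} in $a,b$ (the bracketed quantity is symmetric in $i,j$), so it does not disappear under symmetrisation as you suggest; and since $\tfrac{\bar{\delta}}{\bar{\delta}x^d}$ is normal to $V'T^*M$, it cannot be ``absorbed into $\bar{\nabla}$'' either. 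By Theorem~\ref{tot geo 1} this horizontal piece vanishes only when $VT^*M$ is totally geodesic, which is not assumed. So if one insists on the standard definition of totally umbilical with respect to the full normal bundle in $TT^*M$, the conclusion fails in general.

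The paper sidesteps this entirely: it simply writes $H(\bar{\partial}^a,\bar{\partial}^b)=-\tfrac{1}{K^2}g^{ab}C^*$, the same formula used verbatim in Theorem~\ref{tot. geo.} for $V^{\perp}T^*M$ (whose normal direction is $C^*$ alone). In other words, the paper is effectively computing the second fundamental form only in the $C^*$-direction---equivalently, treating $V'T^*M$ as a subfoliation of $VT^*M$ (normal $=C^*$) rather than of all of $TT^*M$. Under that reading the argument is complete and coincides with yours; but your attempt to kill the horizontal term by antisymmetry is incorrect, and you should instead note explicitly that the statement is about umbilicity in the $C^*$-direction, matching the paper's convention.
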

\begin{proof}
From~(\ref{levicivita}), we obtain
$$H(\bar{\partial}^a,\bar{\partial}^b)=-\frac{1}{K^2}g^{ab}C^*$$
that it shows $V'T^*M$ is totally umbilical in $TT^*M$. \end{proof}

\begin{theorem}~\label{indicat}
The tangent bundles $TI^*\!M(c)$ for all $c\in\textbf{R}^+$ is just the foliation determined by the integrable distribution $V^{\perp}T^*M$. Also, it can be shown that $C^*$ and $\xi$ are orthogonal and tangent to $I^*\!M(c)$, respectively.
\end{theorem}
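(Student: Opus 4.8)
The plan is to exhibit the family $\{I^*\!M(c)\}_{c>0}$ as the level sets of the function $K$ and to identify the distribution $V^{\perp}T^*M$ with the kernel of $dK$. The starting point is equation~(\ref{grad}), which asserts that on $I^*\!M(c)$ the gradient of $K$ with respect to $G$ is $\mathrm{grad}(K)=\frac{1}{c}C^*$; since $c$ is arbitrary, this reads $\mathrm{grad}(K)=\frac{1}{K}C^*$ at every point of $T^*M\setminus\{0\}$. From this I would deduce, for any tangent vector $X$, that
\[
dK(X)=G(\mathrm{grad}(K),X)=\tfrac{1}{K}\,G(C^*,X),
\]
so that $\ker(dK)=\{X:G(C^*,X)=0\}=V^{\perp}T^*M$. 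This is the central identification on which the whole statement rests.

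First I would check that each $c>0$ is a regular value of $K$: since $\mathrm{grad}(K)=\frac{1}{K}C^*\neq 0$ away from the zero section, $dK$ is nowhere vanishing, so every level set $I^*\!M(c)=\{K=c\}$ is a smooth hypersurface of $T^*M$ with tangent space $T_{(x,p)}I^*\!M(c)=\ker\big(dK_{(x,p)}\big)=V^{\perp}T^*M|_{(x,p)}$. Because the level sets of $K$ partition $T^*M\setminus\{0\}$, the collection $\{I^*\!M(c)\}_{c>0}$ is a foliation whose tangent distribution is exactly $V^{\perp}T^*M$; in particular $V^{\perp}T^*M$ is integrable and the foliation it determines coincides with the family of indicatrix bundles. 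This disposes of the first assertion and simultaneously gives $TI^*\!M(c)=V^{\perp}T^*M|_{I^*\!M(c)}$.

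It then remains to locate $C^*$ and $\xi=JC^*$ relative to this hypersurface. Since $C^*=K\,\mathrm{grad}(K)$ is a nonzero multiple of the gradient of $K$, it is everywhere normal to the level sets, so $C^*$ is orthogonal to $I^*\!M(c)$. For $\xi$ I would use the $G$-compatibility of the almost complex structure $J$: from $G(JU,JV)=G(U,V)$ one obtains $G(JU,V)=-G(U,JV)$, whence $G(\xi,C^*)=G(JC^*,C^*)=-G(C^*,JC^*)=-G(\xi,C^*)$ and therefore $G(\xi,C^*)=0$. Thus $\xi\in(C^*)^{\perp}=V^{\perp}T^*M=TI^*\!M(c)$, so $\xi$ is tangent to $I^*\!M(c)$, in agreement with the local frame~(\ref{basis3}).

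The only non-formal ingredient is the identity $\mathrm{grad}(K)=\frac{1}{K}C^*$, equivalently the vanishing of the horizontal derivative $\frac{\delta K}{\delta x^i}$ (a defining property of the canonical nonlinear connection $N_{ij}$) together with the homogeneity relation $g^{ij}p_i=K\,\partial^jK$, which forces the vertical part of $\mathrm{grad}(K)$ to equal $\frac{1}{K}p_i\partial^i=\frac{1}{K}C^*$. This is precisely what~(\ref{grad}) records, and I expect it to be the only step requiring genuine Cartan-space input; once it is granted, every remaining step is a formal consequence of the compatibility of $G$ and $J$ together with the regular value theorem.
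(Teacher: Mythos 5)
Your proposal is correct and follows essentially the same route as the paper: both proofs rest on equation~(\ref{grad}) identifying $C^*$ (up to the factor $\frac{1}{K}$) as the gradient of $K$, hence as the normal to the level hypersurfaces $I^*\!M(c)$, which immediately identifies the foliation by indicatrix bundles with $V^{\perp}T^*M$. The only cosmetic difference is in the last step: the paper notes directly that $\xi(K)=0$, while you derive $G(\xi,C^*)=0$ from the skew-adjointness of $J$ with respect to $G$ --- two equivalent one-line arguments, since $dK(\xi)=\frac{1}{K}G(C^*,\xi)$.
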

\begin{proof}
In~(\ref{grad}), it is shown that $C^*$ is orthogonal to $TI^*\!M(c)$. Therefore, foliation of tangent bundles of level hypersurfaces of $K$ is equal to $V^{\perp}T^*M$. On other hand, it is easy to see that $\xi(K)=0$ and $\xi$ is tangent to level hypersurfaces of $K$. \end{proof}

From~(\ref{brackets}) the followings are obtained:
$$R_{ijk}g^{kh}p_h=0,\ \ \ R_{ijk}+R_{jki}+R_{kij}=0$$
Therefore, $R_{ij}$ defined by $p_hg^{hk}R_{ikj}$ is a symmetric tensor for indices $i$ and $j$. In~\cite{miron}, it is proved that Cartan space $(M,K)$ is of constant curvature $c$ if and only if the following holds:
\begin{equation}~\label{cur}
R_{ij}=cK^2h_{ij}
\end{equation}
where $h_{ij}$ is \emph{angular metric} tensor field of a Cartan space defined by:
\begin{equation}~\label{angular}
h_{ij}=g_{ij}-\frac{1}{K^2}p_ip_j
\end{equation}
Let $\wedge^*=(\wedge^*_{ij})$ given by
\begin{equation}~\label{ang cur 1}
\wedge^*_{ij}=R_{ij}+h_{ij}
\end{equation}
be a symmetric bilinear tensor field on $C^{\infty}(M)$-module $\Gamma(HT^*M)$, and call it \emph{angular curvature} of $M$.
\begin{lemma}~\label{ang cur}
For any $X\in\Gamma(HT^*M)$, the following holds:
$$\wedge^*(\xi,X)=0.$$
\end{lemma}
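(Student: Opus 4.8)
The plan is to work entirely in the horizontal frame $\{\frac{\delta}{\delta x^i}\}$ and reduce the statement to two elementary index identities. First I would make $\xi$ explicit. Applying the almost complex structure $J=g^{ij}\frac{\delta}{\delta x^j}\otimes\delta p_i-g_{ij}\partial^j\otimes dx^i$ to the Liouville--Hamilton field $C^*=p_i\partial^i$, and using $\delta p_i(\partial^k)=\delta_i^k$ together with $dx^i(\partial^k)=0$, gives $J(\partial^k)=g^{kj}\frac{\delta}{\delta x^j}$, hence
$$\xi=J(C^*)=p_k g^{ki}\frac{\delta}{\delta x^i}=:\ell^i\frac{\delta}{\delta x^i},\qquad \ell^i:=g^{ik}p_k.$$
Since $\wedge^*$ is the symmetric bilinear form on $\Gamma(HT^*M)$ with components $\wedge^*_{ij}=R_{ij}+h_{ij}$, writing $X=X^j\frac{\delta}{\delta x^j}$ reduces the claim $\wedge^*(\xi,X)=0$ to proving $\wedge^*_{ij}\ell^i=0$ for every $j$, i.e.\ that $\ell=(\ell^i)$ is annihilated by $\wedge^*$.

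Next I would treat the two summands separately. For the curvature part I substitute the definition $R_{ij}=p_h g^{hk}R_{ikj}=\ell^k R_{ikj}$, so that $R_{ij}\ell^i=\ell^i\ell^k R_{ikj}$. Because $R_{ijk}=\frac{\delta N_{jk}}{\delta x^i}-\frac{\delta N_{ik}}{\delta x^j}$ is antisymmetric in its first two slots, we have $R_{ikj}=-R_{kij}$, whereas $\ell^i\ell^k$ is symmetric in $i,k$; the contraction of a symmetric with an antisymmetric tensor vanishes, giving $R_{ij}\ell^i=0$. (Equivalently one may invoke the stated relations $R_{ijk}g^{kh}p_h=0$ and $R_{ijk}+R_{jki}+R_{kij}=0$.)

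For the angular-metric part I use $h_{ij}=g_{ij}-\frac{1}{K^2}p_ip_j$ together with the homogeneity identity $g^{ij}p_ip_j=K^2$, which follows from $g^{ij}=\frac12\partial^i\partial^j K^2$ and Euler's theorem applied to the $2$-homogeneous function $K^2$. Then $g_{ij}\ell^i=g_{ij}g^{ik}p_k=p_j$ and $p_i\ell^i=g^{ik}p_ip_k=K^2$, so
$$h_{ij}\ell^i=p_j-\frac{1}{K^2}p_j\,K^2=0.$$
Adding the two computations yields $\wedge^*_{ij}\ell^i=0$, whence $\wedge^*(\xi,X)=\ell^iX^j\wedge^*_{ij}=0$ for arbitrary $X\in\Gamma(HT^*M)$, as required.

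I expect the only genuine subtlety to be the first step—correctly identifying $\xi$ as the horizontal lift $\ell^i\frac{\delta}{\delta x^i}$ of the momentum covector—after which both index computations are short and rely only on the antisymmetry of $R_{ijk}$ and the homogeneity relation $g^{ij}p_ip_j=K^2$. The remaining care is purely bookkeeping: tracking which slot of $R_{ikj}$ carries the antisymmetry so that the symmetric factor $\ell^i\ell^k$ kills it.
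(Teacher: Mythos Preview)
Your proof is correct and follows essentially the same route as the paper: identify $\xi=p_hg^{hi}\frac{\delta}{\delta x^i}$, split $\wedge^*_{ij}=R_{ij}+h_{ij}$, and show each piece annihilates $\ell^i=g^{ik}p_k$ by, respectively, the antisymmetry of $R_{ikj}$ in its first two slots and the homogeneity identity $g^{ij}p_ip_j=K^2$. The paper's version is terser (it simply writes $R_{ij}p_hg^{hi}X^j=0$ without spelling out the antisymmetry argument), but the content is the same.
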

\begin{proof}
$$\wedge^*_{ij}p_hg^{hi}X^j=R_{ij}p_hg^{hi}X^j+g_{ij}p_hg^{hi}X^j-\frac{1}{K^2}p_ip_jp_hg^{hi}X^j$$
$$=0+p_jX^j-p_jX^j=0$$. \end{proof}

\begin{theorem}~\label{bundle and angu}
Let $(M,K)$ be a Cartan space and $I^*\!M(c)$ be a $c$-indicatrix over $M$. Then, metric $G$ is bundle-like on $I^*\!M(c)$ for $\xi$ if and only if $\wedge^*=0$ on $I^*\!M(c)$.
\end{theorem}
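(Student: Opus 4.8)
The plan is to apply the same bundle-like criterion already used in Theorems~\ref{bundle like} and~\ref{bundle like1} to the one-dimensional foliation determined by $\xi$ inside the indicatrix $I^*\!M(c)$. Its leaves are tangent to $\langle\xi\rangle$, and by the block form~(\ref{met.mat.}) the $\bar G$-orthogonal complement of $\xi$ in $T(I^*\!M(c))$ is $Q=\langle\frac{\bar\delta}{\bar\delta x^a},\bar\partial^a\rangle$. Thus $G$ is bundle-like for $\xi$ if and only if
$$\bar G(\bar\nabla_XY+\bar\nabla_YX,\xi)=0,\qquad \forall\, X,Y\in\Gamma(Q).$$
Since $\xi$ is tangent to $I^*\!M(c)$ (Theorem~\ref{indicat}) while the second fundamental form takes values in the normal direction $C^*$, one has $\bar G(\bar\nabla_XY,\xi)=G(\nabla_XY,\xi)$, so the whole computation can be carried out with the ambient connection~(\ref{levicivita})--(\ref{levicivita2}) simply by reading off the $\xi$-component of each $\nabla_XY$ and multiplying by $G(\xi,\xi)=K^2$. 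First I would record the identity $\bar E_a^ip_i=0$, which follows from $E_i^ag^{ij}p_j=0$ and $\bar E_a^i=E_j^bg_{ba}g^{ji}$; it will be used repeatedly.

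Next I would dispose of the two easy families. For $X=\bar\partial^a,\,Y=\bar\partial^b$ the vector $\nabla_{\bar\partial^a}\bar\partial^b$ in~(\ref{levicivita}) has components only along $\bar\partial^d$, $C^*$ and $\frac{\bar\delta}{\bar\delta x^d}$, all orthogonal to $\xi$, so this pair contributes nothing. For the mixed pair $X=\frac{\bar\delta}{\bar\delta x^a},\,Y=\bar\partial^b$ I would extract from~(\ref{levicivita}) the $\xi$-coefficients of $\nabla_{\frac{\bar\delta}{\bar\delta x^a}}\bar\partial^b$ and $\nabla_{\bar\partial^b}\frac{\bar\delta}{\bar\delta x^a}$, namely $-\frac{1}{2K^2}R_a^b$ and $-\frac{1}{2K^2}(R_a^b+2\delta_a^b)$; their sum times $K^2$ equals $-(R_a^b+\delta_a^b)$. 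Since $R_{ij}$ is symmetric and $\bar E_a^ip_i=0$ forces $\bar E_a^i\bar E_b^jh_{ij}=g_{ab}$, lowering an index shows this vanishes precisely when $\wedge^*_{ab}=R_{ab}+g_{ab}=0$. Hence the mixed family produces exactly the condition $\wedge^*_{ab}=0$.

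The remaining family $X=\frac{\bar\delta}{\bar\delta x^a},\,Y=\frac{\bar\delta}{\bar\delta x^b}$ is the delicate one: its contribution is $K^2$ times the (manifestly $a\leftrightarrow b$ symmetric) $\xi$-coefficient of $\nabla_{\frac{\bar\delta}{\bar\delta x^a}}\frac{\bar\delta}{\bar\delta x^b}$ in~(\ref{levicivita}). The plan is to show it is identically zero, so that it imposes no extra condition. Writing $P^i=g^{ij}p_j$, using $\xi=P^i\frac{\delta}{\delta x^i}$ and $\frac{\delta p_k}{\delta x^i}=N_{ik}$, together with $E_j^cP^j=0$, the relation $E_j^cg_{cb}=\bar E_b^ig_{ij}$ and the symmetry $N_{ij}=N_{ji}$, I would collapse this symmetrised coefficient to
$$2\,\bar E_a^i\bar E_b^j\bigl(p_k\Gamma_{ij}^k-N_{ij}\bigr),$$
the decisive step being the Christoffel identity $\frac{\delta g_{il}}{\delta x^j}+\frac{\delta g_{jl}}{\delta x^i}-\frac{\delta g_{ij}}{\delta x^l}=2g_{kl}\Gamma_{ij}^k$ contracted with $\bar E_a^i\bar E_b^jP^l$, where $g_{kl}P^l=p_k$. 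This expression vanishes because the canonical nonlinear connection of the Cartan space satisfies $N_{ij}=p_k\Gamma_{ij}^k$ modulo terms proportional to $p_i$ or $p_j$, which the $\bar E$'s annihilate. Verifying this last identity from the explicit formula for $N_{ij}$ is the main obstacle of the proof; the rest is bookkeeping.

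Finally I would assemble the three families: $G$ is bundle-like for $\xi$ on $I^*\!M(c)$ if and only if $\wedge^*_{ab}=\bar E_a^i\bar E_b^j\wedge^*_{ij}=0$ for all $a,b$. To upgrade this to $\wedge^*=0$ I invoke Lemma~\ref{ang cur}, which gives $P^i\wedge^*_{ij}=0$, i.e. $\wedge^*$ annihilates the $\xi$-direction. Since $P^i$ (with $P^ip_i=K^2\neq0$) together with the $n-1$ vectors $\bar E_a^i$ span the whole horizontal fibre and $\wedge^*$ is symmetric, the vanishing of $\wedge^*$ on $Q$ and on the $\xi$-direction forces $\wedge^*=0$ on $I^*\!M(c)$; the converse is immediate. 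This yields the stated equivalence.
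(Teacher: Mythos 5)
Your proposal is correct and is essentially the paper's own proof transplanted into the Section 3 adapted frame: the paper verifies the same bundle-like criterion for pairs $X,Y$ orthogonal to $\xi$ and $C^*$, but in the natural frame $\{\delta/\delta x^i,\partial^i\}$ via Theorem~\ref{levi1}, finding (as you do) that the pure horizontal and pure vertical contributions vanish while the mixed pairs produce exactly $\delta_i^j+R_{is}g^{sj}$, which after inserting the term $-\frac{1}{K^2}p_ip^j$ (killed by the constraints, cf.\ Lemma~\ref{ang cur}) is equivalent to $\wedge^*_{ij}=h_{ij}+R_{ij}=0$. The identity you flag as the main obstacle, $\bar E_a^i\bar E_b^j\bigl(p_k\Gamma_{ij}^k-N_{ij}\bigr)=0$, is precisely what the paper's unproved assertion $G(\nabla_{\delta/\delta x^i}\xi,\delta/\delta x^j)=0$ encodes; it holds because the canonical nonlinear connection of a Cartan space satisfies the deflection identity $N_{ij}=p_k\Gamma_{ij}^k$ (vanishing h-deflection of the canonical metrical connection, see~\cite{miron}), so your horizontal-horizontal family indeed contributes nothing.
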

\begin{proof}
$\xi$ is bundle-like on $I^*\!M(c)$ if and only if the following holds:
$$G(\nabla_XY,\xi)+G(\nabla_YX,\xi)=0$$
where $X=X^i\frac{\delta}{\delta x^i}+\bar{X}_j\partial^j$, $Y=Y^i\frac{\delta}{\delta x^i}+\bar{Y}_j\partial^j$, $X^ip_i=Y^ip_i=0$ and $\bar{X}_jp_ig^{ij}=\bar{Y}_jp_ig^{ij}=0$. By help of Theorem~\ref{levi1}, it can be obtain that:
$$G(\nabla_{\frac{\delta}{\delta x^i}}\xi,\frac{\delta}{\delta x^j})=G(\nabla_{\partial^i}\xi,\partial^j)=0$$
and
$$G(\nabla_{\partial^i}\xi,\frac{\delta}{\delta x^j})=G(\nabla_{\frac{\delta}{\delta x^j}}\xi,\partial^i)+\delta_j^i=\delta_j^i+\frac{1}{2}R_{js}g^{si}$$
Therefore, we obtain that:
$$0=G(\nabla_XY,\xi)+G(\nabla_YX,\xi)=-G(\nabla_X\xi,Y)-G(\nabla_Y\xi,X)$$
$$=-\bar{X}_jY^i(\delta_i^j+R_{is}g^{sj})-\bar{Y}_jX^i(\delta_i^j+R_{is}g^{sj})$$
$$\Longleftrightarrow (\bar{X}_jY^i+\bar{Y}_jX^i)(\delta_i^j+R_{is}g^{sj})=0$$
$$\Longleftrightarrow (\bar{X}_jY^i+\bar{Y}_jX^i)(\delta_i^j+R_{is}g^{sj}-\frac{1}{K^2}p_ip^j)=0$$
$$\Longleftrightarrow g_{ij}-\frac{1}{K^2}p_ip_j+R_{ij}=0 \Longleftrightarrow \wedge^*_{ij}=h_{ij}+R_{ij}=0$$. \end{proof}

\begin{theorem}~\label{Killing}
Let $(M,K)$ be a Cartan space. Then, $\xi$ is a Killing vector field on $I^*\!M(c)$ if and only if $\wedge^*=0$ on $I^*\!M(c)$.
\end{theorem}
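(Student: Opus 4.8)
The plan is to exploit the close structural parallel between the Killing condition for $\xi$ and the bundle-like condition that was just analyzed in Theorem~\ref{bundle and angu}. Recall that $\xi$ is Killing on $I^*\!M(c)$ precisely when $(\mathcal{L}_{\xi}\bar{G})(X,Y)=0$ for all $X,Y$ tangent to the leaf space, i.e.\ for all $X,Y$ orthogonal to $\xi$ in $T(I^*\!M(c))$. Writing this out via the metric compatibility of $\bar{\nabla}$ gives $(\mathcal{L}_{\xi}\bar{G})(X,Y)=\bar{G}(\bar{\nabla}_X\xi,Y)+\bar{G}(\bar{\nabla}_Y\xi,X)$. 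Since all the connection components appearing here are tangent to $I^*\!M(c)$ (they are among those listed in~(\ref{levicivita})--(\ref{levicivita2}) other than $\nabla_{\bar\partial^a}\bar\partial^b$), we have $\bar\nabla=\nabla$ on them, so the symmetrized expression I must control is exactly $G(\nabla_X\xi,Y)+G(\nabla_Y\xi,X)$ with $X,Y$ ranging over $\langle\frac{\bar\delta}{\bar\delta x^a},\bar\partial^a\rangle$.

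First I would assemble the relevant inner products from~(\ref{levicivita1}). The key observation is that these are the \emph{same} quantities already computed in the proof of Theorem~\ref{bundle and angu}: namely $G(\nabla_{\frac{\delta}{\delta x^i}}\xi,\frac{\delta}{\delta x^j})=0$, $G(\nabla_{\partial^i}\xi,\partial^j)=0$, and the mixed term $G(\nabla_{\partial^i}\xi,\frac{\delta}{\delta x^j})=\delta_j^i+\tfrac{1}{2}R_{js}g^{si}$ together with its counterpart $G(\nabla_{\frac{\delta}{\delta x^j}}\xi,\partial^i)$. Symmetrizing over the two arguments produces precisely the bilinear form $(\bar X_jY^i+\bar Y_jX^i)(\delta_i^j+R_{is}g^{sj})$, on which I then impose the tangency constraints $X^ip_i=\bar X_jp_ig^{ij}=0$ (and similarly for $Y$) that restrict to vectors orthogonal to $\xi$ inside $T(I^*\!M(c))$.

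The vanishing of this symmetrized form for all admissible $X,Y$ is then equivalent, by the same argument as in Theorem~\ref{bundle and angu}, to the contraction $(\delta_i^j+R_{is}g^{sj})$ being orthogonal to the relevant index combinations; using the constraint to insert the $-\tfrac{1}{K^2}p_ip^j$ term freely (since $p^j\bar X_j=0$), this collapses to $g_{ij}-\tfrac{1}{K^2}p_ip_j+R_{ij}=0$, i.e.\ $\wedge^*_{ij}=h_{ij}+R_{ij}=0$ by definitions~(\ref{angular}) and~(\ref{ang cur 1}).

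The real content is recognizing that the Killing equation for $\xi$ reduces, after restriction to $I^*\!M(c)$, to the \emph{same} algebraic identity as the bundle-like condition. This is not automatic in general: the Killing condition demands $\bar G(\bar\nabla_X\xi,Y)+\bar G(\bar\nabla_Y\xi,X)=0$ for $X,Y$ perpendicular to $\xi$, whereas bundle-likeness only requires $G(\nabla_XY+\nabla_YX,\xi)=0$ for $X,Y$ transverse to the $\xi$-foliation; the coincidence rests on metric compatibility, $G(\nabla_X\xi,Y)=-G(\xi,\nabla_XY)$, which exchanges the two. The main obstacle I anticipate is therefore a careful bookkeeping step: verifying that the $\xi$ and $C^*$ directions can be consistently excluded so that the quantifier ``for all $X,Y\perp\xi$ tangent to $I^*\!M(c)$'' produces exactly the full symmetric tensor $\wedge^*_{ij}$ and not merely a partial trace of it. Lemma~\ref{ang cur}, which states $\wedge^*(\xi,X)=0$, is the tool that handles this: it guarantees that restricting to vectors orthogonal to $\xi$ loses no information about $\wedge^*$, so vanishing on the restricted space is genuinely equivalent to $\wedge^*=0$.
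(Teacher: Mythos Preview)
Your overall strategy---reduce to Theorem~\ref{bundle and angu} by identifying the Killing expression $\bar G(\bar\nabla_X\xi,Y)+\bar G(\bar\nabla_Y\xi,X)$ with the bundle-like expression $-G(\nabla_XY+\nabla_YX,\xi)$ via metric compatibility---is exactly the paper's approach. However, there is a genuine gap at the very first step.

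You write ``Recall that $\xi$ is Killing on $I^*\!M(c)$ precisely when $(\mathcal{L}_{\xi}\bar G)(X,Y)=0$ for all $X,Y$ orthogonal to $\xi$.'' That is not the definition: the Killing condition requires $(\mathcal{L}_\xi\bar G)(X,Y)=0$ for \emph{all} $X,Y\in T(I^*\!M(c))$, including the cases where one or both arguments equal $\xi$. The bundle-like condition only demands it for $X,Y\perp\xi$. So Killing $\Rightarrow$ bundle-like is immediate, but the converse needs the extra components $(\mathcal{L}_\xi\bar G)(\xi,\xi)$ and $(\mathcal{L}_\xi\bar G)(\xi,X)$ to vanish automatically. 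This is precisely the content the paper supplies: from~(\ref{levicivita2}) one has $\nabla_\xi\xi=0$, and from~(\ref{levicivita1}) (or equivalently from $\|\xi\|^2=K^2=c^2$ constant on $I^*\!M(c)$) one has $G(\nabla_{\frac{\bar\delta}{\bar\delta x^a}}\xi,\xi)=G(\nabla_{\bar\partial^a}\xi,\xi)=0$, so these remaining components do vanish. Your invocation of Lemma~\ref{ang cur} addresses only the $\wedge^*$ side of the equivalence (showing no information about $\wedge^*$ is lost by restricting to $X\perp\xi$); it does nothing for the missing $\xi$-components of the Killing tensor. Add that short verification and your argument matches the paper's.
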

\begin{proof}
Here, it is shown that $\xi$ is a Killing vector field on $I^*\!M(c)$ if and only if metric $G$ is bundle-like on $I^*\!M(c)$ for $\xi$. Then by Theorem~\ref{bundle and angu} the proof is complete. It is obvious that if $\xi$ is a Killing vector field then $G$ is bundle like on $I^*\!M(c)$ for $\xi$. The converse state is true if $G$ be bundle like on $I^*\!M(c)$ for $\xi$ and the followings hold:
$$G(\nabla_{\xi}\xi,\xi)=G(\nabla_{\xi}\xi,X)+G(\nabla_X\xi,\xi)=0$$
for all $X\in\Gamma(I^*\!M(c))$ where $G(X,\xi)=0$. Since $\nabla_{\xi}\xi=0$, it is enough to show that $G(\nabla_{\frac{\bar{\delta}}{\bar{\delta}x^a}}\xi,\xi)=G(\nabla_{\bar{\partial}^a}\xi,\xi)=0$ which it is obvious from~(\ref{levicivita1}) and it makes proof complete. \end{proof}

\begin{theorem}~\label{constant cur}
Let $(M,K)$ be a Cartan space. Then, $M$ is a Cartan space of negative constant curvature $k$ if and only if $\wedge^*$ be vanish on $I^*\!M(c)$, where $c^2=-\frac{1}{k}$.
\end{theorem}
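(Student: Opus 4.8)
The plan is to reduce everything to the Miron characterization~(\ref{cur}), which says that $(M,K)$ has constant curvature $k$ if and only if $R_{ij}=kK^2h_{ij}$ holds throughout $T^*M\setminus\{0\}$. Since $\wedge^*_{ij}=R_{ij}+h_{ij}$ by~(\ref{ang cur 1}), the argument turns on comparing the factor $kK^2$ with the constant $-1$ that the indicatrix relation $K^2=c^2=-1/k$ forces. The forward implication is then immediate: assuming negative constant curvature $k$, equation~(\ref{cur}) gives $R_{ij}=kK^2h_{ij}$, so restricting to $I^*\!M(c)$ where $K^2=-1/k$ turns this into $R_{ij}=-h_{ij}$, whence $\wedge^*_{ij}=R_{ij}+h_{ij}=0$ on $I^*\!M(c)$.

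The substance lies in the converse, where the identity must be propagated from the single hypersurface $I^*\!M(c)$ to all of $T^*M\setminus\{0\}$; the tool for this is homogeneity in the momenta. First I would record the relevant degrees: as $K$ is $1$-homogeneous, $g^{ij}=\frac12\partial^i\partial^jK^2$ and its inverse are $0$-homogeneous, so $h_{ij}=g_{ij}-K^{-2}p_ip_j$ is $0$-homogeneous, while chasing the definition of $N_{ij}$ and then $R_{ijk}=\frac{\delta N_{jk}}{\delta x^i}-\frac{\delta N_{ik}}{\delta x^j}$ shows $R_{ijk}$ is $1$-homogeneous, so that $R_{ij}=p_hg^{hk}R_{ikj}$ is $2$-homogeneous. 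Now assume $\wedge^*=0$ on $I^*\!M(c)$, i.e. $R_{ij}=-h_{ij}$ there. For arbitrary $(x,p)$ with $K(x,p)=\lambda\neq0$, the covector $\tilde p=(c/\lambda)p$ lies on $I^*\!M(c)$ by $1$-homogeneity of $K$, and evaluating $R_{ij}=-h_{ij}$ at $(x,\tilde p)$ while invoking the degrees $2$ and $0$ gives $(c/\lambda)^2R_{ij}(x,p)=-h_{ij}(x,p)$. Using $\lambda^2=K^2$ and $1/c^2=-k$, this rearranges to $R_{ij}=kK^2h_{ij}$ at $(x,p)$, which is precisely~(\ref{cur}); Miron's theorem then yields constant curvature $k$, and $c^2=-1/k>0$ forces $k<0$.

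The step I expect to be the main obstacle is the homogeneity bookkeeping in the converse, specifically confirming that $R_{ij}$ has degree exactly $2$: an error here would change the power of $K^2$ that emerges from the rescaling and wreck the identification with~(\ref{cur}). Once the degrees are secured, the remainder is a direct substitution.
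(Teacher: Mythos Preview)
Your proposal is correct and follows essentially the same route as the paper: the forward implication is obtained by restricting~(\ref{cur}) to $I^*\!M(c)$, and the converse is obtained by rescaling an arbitrary $(x,p)$ onto $I^*\!M(c)$ and invoking the homogeneity degrees $2$ and $0$ of $R_{ij}$ and $h_{ij}$ to recover~(\ref{cur}) globally. The paper likewise records those degrees without a detailed derivation, so your explicit bookkeeping is, if anything, slightly more thorough.
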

\begin{proof}
Let $M$ be a Cartan space of negative constant curvature $k$, then by~(\ref{cur}) we obtain:
\begin{equation}~\label{111}
R_{ij}(x,p)=-h_{ij}(x,p)\ \ \ \ \ \forall (x,p)\in I^*\!M(c)
\end{equation}
Thus, by~(\ref{ang cur 1}) we have $\wedge^*=0$ on $I^*\!M(c)$. Conversely, suppose that $\wedge^*=0$ on $I^*\!M(c)$. To complete the proof it is enough to show that~(\ref{111}) is valid for $(x,p)\in T^*M\setminus I^*\!M(c)$. For all $(x,p)\in T^*M$ there exists a constant $\bar{c}$ which $(x,\frac{p}{\bar{c}})\in I^*\!M(c)$. By definition $R_{ij}$ and equations~(\ref{brackets}) and~(\ref{angular}), it is easy to see that $R_{ij}$ and $h_{ij}$ are homogeneous of degree two and zero, respectively. Thus:
$$R_{ij}(x,\frac{p}{\bar{c}})=-h_{ij}(x,\frac{p}{\bar{c}}) \Longleftrightarrow R_{ij}(x,p)=-\bar{c}^2h_{ij}(x,p)$$
$$\Longleftrightarrow R_{ij}(x,p)=kK^2(x,p)h_{ij}(x,p) \ \ \ \forall (x,p)\in T^*M$$
and this completes the proof. \end{proof}

Combining Theorems~\ref{bundle and angu},~\ref{Killing} and~\ref{constant cur} the following is obtained:
\begin{theorem}~\label{final}
Let $(M,K)$ be a Cartan space, and $k<0<c$ two constant where $-c^2k=1$. Then the followings are equivalent:
\begin{enumerate}
\item{$M$ is a Cartan space of constant curvature k.}
\item{$G$ is bundle-like on $I^*\!M(c)$ for $\xi$.}
\item{$\xi$ is a Killing vector field on $I^*\!M(c)$.}
\item{Angular curvature $\wedge^*$ is vanish on $I^*\!M(c)$.}
\end{enumerate}
\end{theorem}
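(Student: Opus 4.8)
The plan is to regard condition~(4) --- the vanishing of the angular curvature $\wedge^*$ on $I^*\!M(c)$ --- as a common hub and to obtain the four-way equivalence by showing that each of (1), (2), (3) is separately equivalent to (4). All three such biconditionals have in fact already been established: Theorem~\ref{constant cur} gives $(1)\Leftrightarrow(4)$ under the relation $c^2=-1/k$, Theorem~\ref{bundle and angu} gives $(2)\Leftrightarrow(4)$, and Theorem~\ref{Killing} gives $(3)\Leftrightarrow(4)$. Thus the proof reduces to a consistency check on the constants followed by a purely logical assembly.

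First I would confirm that the constraints match. The hypothesis $-c^2k=1$ of the present statement is precisely $c^2=-1/k$, which is the relation under which Theorem~\ref{constant cur} was proved; since $k<0$, this $c$ is a genuine positive radius, so $(1)\Leftrightarrow(4)$ applies without modification. With all of $(1)$, $(2)$, $(3)$ equivalent to $(4)$, the diagram is star-shaped about (4): chaining $(2)\Leftrightarrow(4)\Leftrightarrow(3)$ and $(1)\Leftrightarrow(4)$ yields that the four conditions are pairwise equivalent, which is the assertion.

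The substance of the theorem therefore lives entirely in the three cited results, and if I were proving it from scratch the genuine obstacle would be Theorem~\ref{bundle and angu}. There one must compute $G(\nabla_X\xi,Y)+G(\nabla_Y\xi,X)$ for $X,Y$ tangent to $I^*\!M(c)$ and orthogonal to $C^*$, using the Levi-Civita connection in the adapted frame~(\ref{basis1}), and then recognize the angular metric. The decisive step is to insert the term $-K^{-2}p_ip^j$ --- admissible exactly because $X$ and $Y$ are perpendicular to the Liouville--Hamilton field --- so that the bundle-like condition $(\bar X_jY^i+\bar Y_jX^i)(\delta_i^j+R_{is}g^{sj})=0$ reorganizes into $g_{ij}-K^{-2}p_ip_j+R_{ij}=0$, i.e. $\wedge^*_{ij}=h_{ij}+R_{ij}=0$. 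It is this identity, together with Lemma~\ref{ang cur} guaranteeing $\wedge^*(\xi,\cdot)=0$, that makes the angular metric $h_{ij}$ surface and links the analytic conditions on $\xi$ to the curvature condition defining constant curvature.
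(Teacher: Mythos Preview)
Your proposal is correct and matches the paper's approach exactly: the paper presents this theorem as an immediate consequence of combining Theorems~\ref{bundle and angu}, \ref{Killing}, and \ref{constant cur}, with no additional argument. Your star-shaped assembly through condition~(4), together with the check that $-c^2k=1$ is the same constraint as $c^2=-1/k$ in Theorem~\ref{constant cur}, is precisely what the paper intends.
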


\section{The contact structure on indicatrix bundle of a Cartan space}
In the first part of this section, it is shown that each $I^*\!M(c)$ naturally has contact structure. Then in the next part, it is proved that this contact structure cannot be a Sasakian one.

The (1,1)-tensor field $\varphi$ is set on the indicatrix bundle $I^*\!M(1)$ as follows:
\begin{equation}~\label{phi}
\varphi:=-J|_{D} \ ,\ \ \ \ \varphi(\xi)=0
\end{equation}
where $D=\{X\in TT^*M\ |\ G(X,\xi)=G(X,C^*)=0\}$. The distribution $D$ is called \emph{contact distribution} in contact manifolds. For more on Riemannian geometry adopted to such distributions see \cite{Blair}. Also, notation $\bar{G}$ is used to restrict metric $G$ to the c-indicatrix bundle. It is obvious that the dual 1-form $\xi$ with respect to the metric $G$ is lioville 1-form $\omega$ defined in~(\ref{geo object}) in Cartan space $(M,K)$. Now, the following theorem can be expressed:
\begin{theorem}~\label{contact}
Let the 4-tuple $(\varphi,\omega,\xi,\bar{G})$ be defined as above. Then 1-indicatrix bundle of a Cartan space with $(\varphi,\omega,\xi,\bar{G})$ is a contact manifold.
\end{theorem}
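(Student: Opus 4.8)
The plan is to verify the three defining axioms of a contact metric structure for the 4-tuple $(\varphi,\omega,\xi,\bar{G})$ on $I^*\!M(1)$. Recall that a $(2m+1)$-dimensional manifold carries an almost contact metric structure $(\varphi,\eta,\xi,\bar{G})$ when $\varphi^2=-\mathrm{Id}+\eta\otimes\xi$, $\eta(\xi)=1$, $\varphi(\xi)=0$, $\eta\circ\varphi=0$, and $\bar{G}(\varphi X,\varphi Y)=\bar{G}(X,Y)-\eta(X)\eta(Y)$; it is moreover a \emph{contact} metric structure when $d\eta(X,Y)=\bar{G}(X,\varphi Y)$. On $I^*\!M(1)$ we have $K=1$, so by the metric matrix~(\ref{met.mat.}) the vector $\xi$ is a unit vector field, and the tangent space is spanned by $\{\xi,\frac{\bar{\delta}}{\bar{\delta}x^a},\bar{\partial}^a\}$ as in~(\ref{basis3}). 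Here $\eta$ is taken to be $\omega$, the dual $1$-form of $\xi$ with respect to $\bar{G}$.

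\textbf{First} I would check the algebraic relations. The identities $\varphi(\xi)=0$ and $\omega(\xi)=\bar{G}(\xi,\xi)=K^2=1$ are immediate on $I^*\!M(1)$. For $\varphi^2=-\mathrm{Id}+\omega\otimes\xi$, I would use the definition $\varphi=-J|_D$ together with the almost-complex relations $J(\bar{\partial}^a)=g^{ab}\frac{\bar{\delta}}{\bar{\delta}x^b}$ and $J(\frac{\bar{\delta}}{\bar{\delta}x^a})=g_{ab}\bar{\partial}^b$ recorded just before~(\ref{basis1}). Since $J^2=-\mathrm{Id}$ and the contact distribution $D$ is exactly $\langle\frac{\bar{\delta}}{\bar{\delta}x^a},\bar{\partial}^a\rangle$ (the orthogonal complement of both $\xi$ and $C^*$), on $D$ one gets $\varphi^2=J^2=-\mathrm{Id}$, while on $\xi$ both sides of $-\mathrm{Id}+\omega\otimes\xi$ vanish. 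Compatibility $\bar{G}(\varphi X,\varphi Y)=\bar{G}(X,Y)-\omega(X)\omega(Y)$ follows because $J$ is an isometry of $G$ and $\xi$ is $\bar{G}$-orthogonal to $D$; here I would read the inner products $\bar{G}(\frac{\bar{\delta}}{\bar{\delta}x^a},\frac{\bar{\delta}}{\bar{\delta}x^b})=g_{ab}$ and $\bar{G}(\bar{\partial}^a,\bar{\partial}^b)=g^{ab}$ directly from~(\ref{met.mat.}).

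\textbf{The main obstacle} will be the genuine contact condition $d\omega(X,Y)=\bar{G}(X,\varphi Y)$, since this is where the metric and the differential of the $1$-form must interlock. The natural route is to compute $d\omega$ via the intrinsic formula $d\omega(X,Y)=X\,\omega(Y)-Y\,\omega(X)-\omega([X,Y])$, evaluating on pairs drawn from the frame $\{\xi,\frac{\bar{\delta}}{\bar{\delta}x^a},\bar{\partial}^a\}$. Because $\omega=\bar{G}(\cdot,\xi)$, the values $\omega(X)$ are the last-column entries of~(\ref{met.mat.}), so $\omega$ vanishes on $D$ and equals $K^2=1$ on $\xi$; hence only the bracket terms survive, and I would feed in the Lie brackets listed after~(\ref{basis1}) — in particular $[\bar{\partial}^a,\xi]=g^{ab}\frac{\bar{\delta}}{\bar{\delta}x^b}+\cdots$ — to extract the surviving $\xi$-components. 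On the other side, $\bar{G}(X,\varphi Y)$ is computed from the action of $\varphi=-J|_D$ and the metric entries $g_{ab},g^{ab}$. The expected outcome is that $d\omega$ pairs $\frac{\bar{\delta}}{\bar{\delta}x^a}$ with $\bar{\partial}^b$ nondegenerately and matches $\bar{G}(\cdot,\varphi\,\cdot)$ on every frame pair; the one subtlety to watch is that the extra non-$J$ terms in the brackets (those proportional to $\partial^i$ or $\frac{\delta}{\delta x^i}$ with coefficients like $\xi(E_i^a)$) must either be tangent-to-$I^*\!M(1)$ corrections that cancel or project away when contracted with $\omega$. Once these two computations agree on all basis pairs, the contact metric axioms are established and the statement follows. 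Finally I would remark that nondegeneracy of $d\omega$ on $D$, equivalently $\omega\wedge(d\omega)^n\neq 0$, confirms $\omega$ is a genuine contact form, completing the verification that $(\varphi,\omega,\xi,\bar{G})$ is a contact manifold.
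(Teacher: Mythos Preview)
Your plan is correct and the algebraic part (the almost contact metric axioms) matches the paper's argument essentially verbatim. The difference lies in how you handle the key contact condition $d\omega(X,Y)=\bar G(X,\varphi Y)$: you propose to compute $d\omega$ intrinsically via $d\omega(X,Y)=X\omega(Y)-Y\omega(X)-\omega([X,Y])$ on the adapted frame $\{\xi,\frac{\bar\delta}{\bar\delta x^a},\bar\partial^a\}$, feeding in the long Lie bracket table after~(\ref{basis1}) and then arguing that the ``extra'' bracket terms are annihilated by $\omega$. The paper instead goes back to the original frame $\{\frac{\delta}{\delta x^i},\partial^i\}$ and computes $d\omega$ directly from the explicit coordinate expression $\omega=p_i\,dx^i$, obtaining $d\omega=dp_i\wedge dx^i=\delta p_i\wedge dx^i$ (using symmetry of $N_{ij}$), and then checks $G(\cdot,J\cdot)=dx^i\wedge\delta p_i$ on $\{\frac{\delta}{\delta x^i},\partial^i\}$; restriction to $I^*\!M(1)$ and $\varphi=-J|_D$ finish the comparison. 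Your route works but forces you to chase and kill several auxiliary terms (the ones involving $\xi(E_i^a)$, $N_{hi}^j$, etc.), whereas the paper's coordinate computation sidesteps that bookkeeping entirely and makes the identity $d\omega=-G(\cdot,J\cdot)$ transparent in two lines.
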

\begin{proof}
The compatibility of $\varphi$ and the metric $\bar{G}$ is equivalent to compatibility of $J$ and $G$. Also, The conditions
$$\omega\circ\varphi=0\ ,\ \ \ \ \varphi(\xi)=0\ ,\ \ \ \ \varphi^2=-I+\xi\otimes\omega$$
are easy to be proved by considering Eq.~(\ref{phi}) and definitions given in above. To complete the proof, the condition $d\omega(X,Y)=\bar{G}(X,\varphi Y)$ for the vector fields $X,Y\in\Gamma(TI^*\!M)$ needs to be checked. By calculating $d\omega$ the following can be obtained:\\
$$d\omega=N_{ij}dx^j\wedge dx^i+\delta p_i\wedge dx^i=\delta p_i\wedge dx^i$$
Also,
$$G(\frac{\delta}{\delta x^i},J\frac{\delta}{\delta x^j})=G(\partial^i,J\partial^j)=0$$
and
$$G(\frac{\delta}{\delta x^i},J\partial^j)=-G(\partial^i,J\frac{\delta}{\delta x^j})=\delta_i^j$$
$$\Longrightarrow \ \ G(.,J.)=dx^i\wedge\delta p_i$$
$$\Longrightarrow \ \ d\omega(X,Y)=-G(X,JY)$$
Since $\bar{G}$ is the restriction of $G$ to the indicatrix and $d\omega(\xi,.)=0$ thus
$$d\omega(X,Y)=\bar{G}(X,\varphi Y)\ \ \ \ \ \forall X,Y\in\Gamma D$$
So, $I^*\!M(1)$ with $(\varphi,\omega,\xi,\bar{G})$ is a contact manifold~\cite{Blair}. \end{proof}


Now, we show that this contact structure cannot be a Sasakian one. First, let $(M,\varphi,\eta,\xi,g)$ be a contact Riemannian manifold. In~\cite{bejancu2}, it was proved that $M$ is Sasakain manifold if and only if
$$(\tilde{\nabla}_X\varphi)Y=0 \ \ \ \ \forall X,Y\in\Gamma(TM)$$
where
$$\tilde{\nabla}_XY=\nabla_XY-\eta(X)\nabla_Y\xi-\eta(Y)\nabla_X\xi+(d\eta+\frac{1}{2}(\mathcal{L}_{\xi}g))(X,Y)\xi$$
and $\nabla$ is Levi-Civita connection on $(M,g)$.

Since the indicatrix bundle has the contact metric structure in Cartan spaces by Theorem~\ref{contact}, the following question cross our mind that "\emph{Can the indicatrix bundle in a Cartan space be a Sasakian manifold?}". First, the following Lemma is proved in order to reduce the number of calculations.
\begin{lemma}\label{redu}
If $(M,\varphi,\omega,\xi,g)$ be a contact metric manifold with contact distribution $D$, then $M$ is Sasakian manifold if and only if:
$$(\tilde{\nabla}_X\varphi)Y=0 \ \ \ \ \forall X,Y\in\Gamma(D)$$
\end{lemma}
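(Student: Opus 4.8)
The plan is to derive this reduction directly from the characterization quoted just above it (from~\cite{bejancu2}): $M$ is Sasakian if and only if $(\tilde{\nabla}_X\varphi)Y=0$ for \emph{all} $X,Y\in\Gamma(TM)$. Since $TM=\langle\xi\rangle\oplus D$, the whole task is to show that the vanishing of $(\tilde{\nabla}_X\varphi)Y$ on $\Gamma(D)\times\Gamma(D)$ already forces it on all of $TM$. First I would check that $\tilde{\nabla}$ is a genuine linear connection: expanding $\tilde{\nabla}_X(fY)$ and using that $\nabla_{\bullet}\xi$ is tensorial in its subscript while $\omega(fY)=f\omega(Y)$ and $(d\omega+\tfrac12\mathcal{L}_\xi g)(X,fY)=f(d\omega+\tfrac12\mathcal{L}_\xi g)(X,Y)$, one obtains the Leibniz rule $\tilde{\nabla}_X(fY)=f\tilde{\nabla}_XY+(Xf)Y$. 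Together with the obvious $C^\infty$-linearity in $X$, this makes $(\tilde{\nabla}_X\varphi)Y$ tensorial in both slots, so it suffices to verify the identity on the frame pieces $\{\xi\}\cup\Gamma(D)$, i.e.\ on the three cases $(X,Y)\in D\times D$, $Y=\xi$, and $X=\xi$ with $Y\in D$.

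The case $D\times D$ is the hypothesis. For $Y=\xi$ (any $X$) I would compute $\tilde{\nabla}_X\xi$ from the defining formula: using $\omega(\xi)=1$ and the contact-metric identity $\nabla_\xi\xi=0$, every term either cancels or is a multiple of $\xi$, so $\tilde{\nabla}_X\xi\in\langle\xi\rangle$; since $\varphi\xi=0$ this gives $(\tilde{\nabla}_X\varphi)\xi=-\varphi(\tilde{\nabla}_X\xi)=0$ automatically. The decisive case is $X=\xi$, $Y\in\Gamma(D)$. Here a short computation (again $\omega(\xi)=1$, $\nabla_\xi\xi=0$, together with $d\omega(\xi,\,\cdot\,)=0$ and $\tfrac12\mathcal{L}_\xi g(\xi,\,\cdot\,)=0$, both following from $\xi$ being the Reeb field with $g(\xi,\xi)=1$) collapses the definition to $\tilde{\nabla}_\xi W=\nabla_\xi W-\nabla_W\xi=[\xi,W]=\mathcal{L}_\xi W$. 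Consequently $(\tilde{\nabla}_\xi\varphi)Y=\mathcal{L}_\xi(\varphi Y)-\varphi\,\mathcal{L}_\xi Y=(\mathcal{L}_\xi\varphi)Y=2hY$, where $h:=\tfrac12\mathcal{L}_\xi\varphi$ is the standard contact-metric operator.

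Thus the Lemma reduces to a single implication: the hypothesis $(\tilde{\nabla}_X\varphi)Y=0$ on $D\times D$ must force $hY=0$ for $Y\in\Gamma(D)$, so that the $X=\xi$ case vanishes too; this is the main obstacle, and I expect to clear it through the contact-metric structure equations rather than by manipulating $\tilde{\nabla}$ directly. For $X,Y\in\Gamma(D)$ the definition gives $(\tilde{\nabla}_X\varphi)Y=(\nabla_X\varphi)Y+(d\omega+\tfrac12\mathcal{L}_\xi g)(X,\varphi Y)\xi$, and a component check (using $d\omega(X,\varphi Y)=-g(X,Y)$ and $\tfrac12\mathcal{L}_\xi g(X,\varphi Y)=-g(hX,Y)$, against $g((\nabla_X\varphi)Y,\xi)=g(X,Y)+g(hX,Y)$) shows the $\xi$-component vanishes identically; hence the hypothesis is equivalent to the vanishing of the $D$-part of $(\nabla_X\varphi)Y$ on $D\times D$. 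Invoking the general almost-contact formula $2g((\nabla_X\varphi)Y,Z)=g(N^{(1)}(Y,Z),\varphi X)$, which holds on $D$ for a contact metric manifold since the fundamental $2$-form $\Phi=d\omega$ is closed and $N^{(2)}=0$, this reads $N^{(1)}|_{D\times D}=0$. I would then close the argument with the identities $\eta(N^{(1)}(Y,Z))=0$ for $Y,Z\in D$ (so $N^{(1)}|_{D\times D}$ is already $D$-valued) and $N^{(1)}(X,\xi)=2\varphi hX$, whose interplay forces $h|_D=0$, i.e.\ $K$-contactness, completing the reduction via~\cite{bejancu2}. The genuinely delicate step, and the one I expect to require the most care, is precisely this passage from $N^{(1)}|_{D\times D}=0$ to $h|_D=0$, as it is the only place where the contact (not merely almost-contact) hypothesis and the coupling of $\varphi$, $h$ and the Nijenhuis tensor are essential.
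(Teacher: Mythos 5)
Your setup is correct and, up to the last step, considerably more careful than the paper's own argument: that $\tilde{\nabla}$ is a linear connection, that $(\tilde{\nabla}_X\varphi)\xi=0$, that $\tilde{\nabla}_\xi W=[\xi,W]$ and hence $(\tilde{\nabla}_\xi\varphi)Y=(\mathcal{L}_\xi\varphi)Y=2hY$, that the $\xi$-component of $(\tilde{\nabla}_X\varphi)Y$ vanishes identically for $X,Y\in\Gamma(D)$, and that the hypothesis of the Lemma is therefore equivalent to $N^{(1)}|_{D\times D}=0$ --- all of this is right. The gap is the step you postpone to the end: the implication from $N^{(1)}|_{D\times D}=0$ to $h|_D=0$ is not merely delicate, it is false, so no amount of care with the structure equations will close it. Indeed $N^{(1)}|_{D\times D}=0$ is exactly integrability of the almost CR structure $(D,\varphi|_D)$, i.e.\ Tanno's condition $(\nabla_X\varphi)Y=g(X+hX,Y)\xi-\eta(Y)(X+hX)$, which is strictly weaker than the Sasakian condition: on any $3$-dimensional contact metric manifold the holomorphic subbundle of the complexified contact distribution has complex rank one, hence is automatically involutive and $N^{(1)}|_{D\times D}=0$, yet non-Sasakian examples exist --- for instance the flat contact metric structure on the $3$-torus (see~\cite{Blair}; a flat metric cannot be Sasakian, since Sasakian metrics have all $\xi$-sectional curvatures equal to $1$). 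In higher dimensions the standard contact metric structure on the unit tangent bundle of a space form of curvature $c\neq 1$ is CR-integrable but not Sasakian. The two identities you want to play off each other, $\eta(N^{(1)}(Y,Z))=0$ for $Y,Z\in\Gamma(D)$ and $N^{(1)}(X,\xi)=2\varphi hX$, concern complementary slots of $N^{(1)}$, and the examples above show that the restriction $N^{(1)}|_{D\times D}$ carries no information whatsoever about $N^{(1)}(\,\cdot\,,\xi)$, i.e.\ about $h$.

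What your computation actually establishes, then, is that Lemma~\ref{redu} is false as stated: its hypothesis amounts to CR-integrability, its conclusion to the Sasakian condition, and these are inequivalent. It may help to know that the paper's own proof collapses at precisely the term you identified as decisive: in expanding $(\tilde{\nabla}_{X+f\xi}\varphi)(Y+g\xi)$, the paper silently sets $f(\tilde{\nabla}_\xi\varphi Y-\varphi\tilde{\nabla}_\xi Y)$ equal to zero, and by your computation this term equals $2fhY$, so dropping it amounts to assuming $K$-contactness --- the essential content of what was to be proved. (The other two dropped terms do vanish, by $\tilde{\nabla}_X\xi=0$ and $\varphi\xi=0$, as you verified.) Note finally that the only use of the Lemma in the paper, Theorem~\ref{Sasaki}, needs only the trivial direction --- Sasakian implies $(\tilde{\nabla}_X\varphi)Y=0$ on $\Gamma(D)\times\Gamma(D)$, an immediate restriction of the theorem quoted from~\cite{bejancu2} --- so that theorem is unaffected; but the equivalence asserted by the Lemma, and aimed at by your proposal, does not hold.
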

\begin{proof}
For all $\bar{X}\in\Gamma(TM)$, they can be written in the form $X+f\xi$ where $X\in\Gamma D$, $f\in C^{\infty}(M)$ and $\xi$ is Reeb vector field of the contact structure on $M$. Therefore:
$$(\tilde{\nabla}_{\bar{X}}\varphi)\bar{Y}=(\tilde{\nabla}_{X+f\xi}\varphi)(Y+g\xi)=
(\tilde{\nabla}_X\varphi)Y+(\tilde{\nabla}_{f\xi}\varphi)Y+(\tilde{\nabla}_X\varphi)g\xi$$
$$+(\tilde{\nabla}_{f\xi}\varphi)g\xi=(\tilde{\nabla}_X\varphi)Y+f(\tilde{\nabla}_{\xi}\varphi Y-\varphi\tilde{\nabla}_{\xi}Y)+\tilde{\nabla}_X\varphi(g\xi)-\varphi(\tilde{\nabla}_Xg\xi)$$
$$+f(\tilde{\nabla}_{\xi}\varphi{g\xi}-\varphi\tilde{\nabla}_{\xi}g\xi)=(\tilde{\nabla}_X\varphi)Y$$
\noindent The lemma is proved using Theorem 3.2 in~\cite{bejancu2} and the last equation. \end{proof}

Now, the following theorem can be expressed:
\begin{theorem}~\label{Sasaki}
Let $(M,K)$ be a Cartan space. Then, indicatrix bundle $I^*\!M(1)$ with contact structure $(I^*\!M(1),\varphi,\omega,\xi,\bar{G})$ can never be a Sasakian manifold.
\end{theorem}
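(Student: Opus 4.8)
The claim is that $I^*\!M(1)$ with the contact structure $(\varphi,\omega,\xi,\bar G)$ is never Sasakian. By Lemma~\ref{redu}, it suffices to exhibit vector fields $X,Y\in\Gamma(D)$ for which $(\tilde\nabla_X\varphi)Y\neq 0$, where
$$\tilde\nabla_XY=\bar\nabla_XY-\omega(X)\bar\nabla_Y\xi-\omega(Y)\bar\nabla_X\xi+\bigl(d\omega+\tfrac{1}{2}\mathcal{L}_\xi\bar G\bigr)(X,Y)\xi.$$
So the proof is a single computation of this modified connection acting on the contact structure, specialized to a convenient choice of test fields.

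**The plan.**

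The plan is to work on $I^*\!M(1)$ (so $K=1$, $c=1$) in the adapted frame~(\ref{basis3}), taking $X=\bar\partial^a$ and $Y=\bar\partial^b$, both of which lie in $D$ since they are $G$-orthogonal to both $C^*$ and $\xi$. First I would record that on $D$ the operator $\varphi=-J$, so by the relations $J(\bar\partial^a)=g^{ab}\tfrac{\bar\delta}{\bar\delta x^b}$ and $J(\tfrac{\bar\delta}{\bar\delta x^a})=g_{ab}\bar\partial^b$ established in Section~3 we have $\varphi(\bar\partial^a)=-g^{ab}\tfrac{\bar\delta}{\bar\delta x^b}$ and $\varphi(\tfrac{\bar\delta}{\bar\delta x^a})=-g_{ab}\bar\partial^b$. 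Next, because $\omega(\bar\partial^a)=G(\bar\partial^a,\xi)=0$, the two middle terms of $\tilde\nabla$ drop out when $X=Y=\bar\partial^a$, leaving only $\tilde\nabla_{\bar\partial^a}\bar\partial^b=\bar\nabla_{\bar\partial^a}\bar\partial^b+(d\omega+\tfrac12\mathcal{L}_\xi\bar G)(\bar\partial^a,\bar\partial^b)\,\xi$. Since $\bar\partial^a,\bar\partial^b\in D$ we have $d\omega(\bar\partial^a,\bar\partial^b)=\bar G(\bar\partial^a,\varphi\bar\partial^b)=0$, so I expect this correction term to reduce to a multiple of $\mathcal{L}_\xi\bar G(\bar\partial^a,\bar\partial^b)$, which can be read off from the Levi-Civita data in~(\ref{levicivita})--(\ref{levicivita1}).

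**Carrying out the comparison.**

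I would then compute both $(\tilde\nabla_{\bar\partial^a}\varphi)\bar\partial^b=\tilde\nabla_{\bar\partial^a}(\varphi\bar\partial^b)-\varphi(\tilde\nabla_{\bar\partial^a}\bar\partial^b)$ by expanding each piece in the frame~(\ref{basis3}) and isolating a component that cannot vanish identically. The cleanest target is the $\xi$-component (equivalently, the failure of $\tilde\nabla\varphi$ to annihilate the Reeb direction): applying $\tilde\nabla_{\bar\partial^a}\varphi$ and pairing with $\xi$ via $\bar G$, the term $\tfrac{1}{K^2}g^{ab}C^*$ appearing in $\bar\nabla_{\bar\partial^a}\bar\partial^b$ (Theorem~\ref{tot. geo.} shows $H(\bar\partial^a,\bar\partial^b)=-\tfrac1{K^2}g^{ab}C^*$, so the ambient piece survives) interacts with $\varphi$ and forces a nonzero $\frac{\bar\delta}{\bar\delta x}$-component after applying $\varphi$ to $\bar\nabla_{\bar\partial^a}\bar\partial^b$. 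Using $G(\bar\partial^a,\bar\partial^b)=g^{ab}$ from~(\ref{met.mat.}) and $K=1$, I expect the obstruction to collapse to something proportional to $g^{ab}\xi$ (or to $g_{ab}$ after contraction), with a nonzero numerical coefficient that does not depend on any curvature quantity. Because $g^{ab}$ is the invertible metric and hence cannot vanish, this proves $(\tilde\nabla_{\bar\partial^a}\varphi)\bar\partial^b\neq0$ identically, so by Lemma~\ref{redu} the structure is never Sasakian.

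**The main obstacle.**

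The hard part will be the bookkeeping in $\tilde\nabla_{\bar\partial^a}(\varphi\bar\partial^b)=-\tilde\nabla_{\bar\partial^a}(g^{bc}\tfrac{\bar\delta}{\bar\delta x^c})$, since this involves differentiating the coefficient $g^{bc}$ along $\bar\partial^a$ as well as the connection terms $\nabla_{\bar\partial^a}\tfrac{\bar\delta}{\bar\delta x^c}$ from~(\ref{levicivita}), many of which carry the auxiliary matrices $E^a_i,\bar E^i_a$ and their derivatives. The risk is that all the curvature-dependent and $E$-dependent contributions could in principle cancel, leaving an inconclusive expression; the key is to track a single component that is manifestly independent of these terms. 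I expect the decisive contribution to come precisely from the $\frac{1}{K^2}$-type normal terms forced by the umbilicity of $V'T^*M$ (Theorem~\ref{umblical}): it is the nonvanishing second fundamental form $H(\bar\partial^a,\bar\partial^b)=-\tfrac1{K^2}g^{ab}C^*$ that obstructs the Sasakian identity, mirroring the classical fact in Finsler geometry that indicatrix bundles fail to be Sasakian. So the strategy is to extract that one robust term and verify its coefficient is nonzero, rather than to fully expand $(\tilde\nabla_X\varphi)Y$.
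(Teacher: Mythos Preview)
Your approach is essentially the paper's: invoke Lemma~\ref{redu} to reduce to $X,Y\in\Gamma(D)$, compute $(\tilde\nabla_X\varphi)Y$ in the adapted frame~(\ref{basis3}), and exhibit a component equal to $g_{ab}$ (equivalently $g^{ab}$), which cannot vanish since the metric is nondegenerate. The paper does exactly this, stating simply that ``one of the components in the above equations is $g_{ab}=0$,'' a contradiction.

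One correction to your heuristic: you attribute the obstruction to the second fundamental form term $H(\bar\partial^a,\bar\partial^b)=-\tfrac{1}{K^2}g^{ab}C^*$, writing that this ``ambient piece survives'' in $\bar\nabla_{\bar\partial^a}\bar\partial^b$. It does not. The Gauss formula gives $\bar\nabla_{\bar\partial^a}\bar\partial^b=\nabla_{\bar\partial^a}\bar\partial^b-H(\bar\partial^a,\bar\partial^b)$, so the $C^*$-component is \emph{removed} when passing to the induced connection on $I^*\!M(1)$; $C^*$ is normal, and $\varphi$ is not even defined on it. The genuine obstruction comes instead from the \emph{tangential} $\xi$-components that appear in $\nabla_{\bar\partial^a}\frac{\bar\delta}{\bar\delta x^c}$, namely the $-\tfrac{1}{2K^2}(R_c^a+2\delta_c^a)\xi$ term in~(\ref{levicivita}). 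When you expand $(\tilde\nabla_{\bar\partial^a}\varphi)\bar\partial^b=-\tilde\nabla_{\bar\partial^a}(g^{bc}\tfrac{\bar\delta}{\bar\delta x^c})-\varphi(\tilde\nabla_{\bar\partial^a}\bar\partial^b)$ and combine with the $d\omega$ and $\mathcal L_\xi\bar G$ corrections, the $R$-dependent pieces cancel and the surviving $\xi$-coefficient is a nonzero multiple of $g^{bc}\delta_c^a=g^{ab}$. So your plan works, but track the $\xi$-component coming from $\tilde\nabla_{\bar\partial^a}\frac{\bar\delta}{\bar\delta x^c}$, not the discarded normal piece.
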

\begin{proof}
From lemma~\ref{redu}, $I^*\!M(1)$ is a Sasakian manifold if and only if:
$$(\tilde{\nabla}_{\frac{\bar{\delta}}{\bar{\delta}x^a}}\varphi)\frac{\bar{\delta}}{\bar{\delta}x^b}= (\tilde{\nabla}_{\frac{\bar{\delta}}{\bar{\delta}x^a}}\varphi)\bar{\partial}^b= (\tilde{\nabla}_{\bar{\partial}^a}\varphi)\frac{\bar{\delta}}{\bar{\delta}x^b}= (\tilde{\nabla}_{\bar{\partial}^a}\varphi)\bar{\partial}^b=0$$
\noindent Using~(\ref{levicivita})--(\ref{levicivita2}), one of the components in above equations is
$$g_{ab}=0$$
\noindent which demonstrates a contradiction and shows that the indicatrix bundle cannot be a Sasakian manifold with contact structure $(\varphi,\omega,\xi,\bar{G})$. \end{proof}


\end{document}